\newtheorem{theorem}{Theorem}[section]
\newtheorem{corollary}[theorem]{Corollary}
\newtheorem{lemma}[theorem]{Lemma}
\newtheorem{proposition}[theorem]{Proposition}
\newcommand{\ud }{\,{\rm d}}
\newcommand{\rf}[1]{\eqref{#1}}
\newcommand{\bbfR}{{\mathbb R}}
\newcommand{\bbfC}{{\mathbb C}}
\newcommand{\ve}{{\varepsilon }}
\newcommand{\se}{{\text{\rm{e}}}}
\title{Asymptotic stability of Landau solutions to Navier-Stokes system\footnote{This work was partially supported 
by the MNiSzW  grants No.  N N201 365736  and N N201 418839,
 and 
the Foundation for Polish Science operated within the
Innovative Economy Operational Programme 2007-2013 funded by European
Regional Development Fund (Ph.D. Programme: Mathematical
Methods in Natural Sciences).}
}
\author{Grzegorz Karch \& Dominika Pilarczyk\\
\rule{0pt}{15pt}\\
\small Instytut Matematyczny, Uniwersytet Wroc\l awski,\\
\small pl. Grunwaldzki 2/4, 50-384 Wroc\l aw, Poland \\
\small e-mail: \small \texttt {\{grzegorz.karch,dominika.pilarczyk\}{@}math.uni.wroc.pl}}
\date{\rule{0pt}{15pt}\\ \today}
\begin{document}

\maketitle

\begin{abstract}
\noindent
It is known that the three dimensional Navier-Stokes system for an incompressible fluid in the whole space has a one parameter family of explicit stationary solutions, which are axisymmetric and homogeneous of degree $-1$. We show that these solutions are asymptotically stable under any $L^2$-perturbation.
\end{abstract}
{\small \bf Mathematics Subject Classification (2000):}  76D07, 76D05, 35Q30, 35B40. \\
{\small \bf Keywords:} {\small Navier--Stokes equation, stationary solutions, asymptotic stability of solutions. }
%%%%%%%%%%%%%%%%%%%%%%%%%%%%%%%%%%%%%%%%%%%%%%%%%%%%%%%%%%%%%%%%%%%%%%%%%%%%
%%%%%%%%%%%%%%%%%%%%%%%%%%%%%%%%%%%%%%%%%%%%%%%%%%%%%%%%%%%%%%%%%%%%%%%%%%%%
\section{Introduction}
\setcounter{equation}{0}

The initial value problem for the Navier--Stokes system describing a motion of a viscous incompressible fluid in the whole three dimensional space has the form
\begin{align}
\label{NS eq}
         u_t-\Delta u+(u\cdot \nabla )u+\nabla p&=F,\quad  (x,t) \in \bbfR^3\times (0,\infty )\\
\label{NS div}         
       {\rm div }\ u&=0, \\
\label{NS in d}       
       u(x,0)&=u_0 (x).
\end{align}  
Here, the velocity $u=(u_1,u_2,u_3)$ and the scalar pressure $p$ are unknown. Moreover, $u_0$ and $F$ denote a given initial velocity and a given external force, respectively. 

It is well-known, since the pioneer work of Leray \cite{L}, that for each $u_0 \in \big(L^2(\bbfR^3)\big)^3$ satisfying $\text{\rm div}\,u_0=0$ and for $F\equiv 0$,  problem \rf{NS eq} possesses a weak solution, satisfying a suitable energy inequality (see the monograph \cite{T} for analogous results with nonzero $F$). The uniqueness and the regularity of weak solutions still remain open. In \cite{L}, Leray posed a question whether a weak solution $u=u(x,t)$ tends to zero in $L^2(\bbfR^3)$ as $t\rightarrow \infty$, which was affirmatively solved by Kato \cite{K} in the case of strong solutions and Masuda \cite{Mas} for weak solutions satisfying a strong energy inequality. Next, Schonbek \cite{S2} obtained decay rates for the $L^2$-norm of weak solutions using elementary properties of the Fourier transform. The ideas from \cite{S2} were developed and generalized by Wiegner \cite{W}. We refer the reader to monographs \cite{T,Lem} for results on the existence of weak and strong solutions to \rf{NS eq}--\rf{NS in d} and to the review article \cite{C} for a discussion of recent results of the large time behavior of solutions. 

If $F\equiv 0$ in problem \rf{NS eq}--\rf{NS in d}, the $L^2$-decay of weak solutions can be understood as the global asymptotic stability in $L^ 2(\bbfR^3)$ of the trivial stationary solution $(u,p)=(0,0)$. In this work, we address analogous questions on the global asymptotic stability of the family of stationary solutions to \rf{NS eq}--\rf{NS div} given by the following explicit formulas
\begin{align}\label{L sol}
   v_ c^1(x)&=2\frac{c|x|^2-2x_1|x|+cx_1^2}{|x|(c|x|-x_1)^2},  &v_c^2(x)&=2\frac{x_2(cx_1-|x|)}{|x|(c|x|-x_1)^2},
\nonumber\\
   v_c^3(x)&=2\frac{x_3(cx_1-|x|)}{|x|(c|x|-x_1)^2},   &p_c(x)&=4\frac{cx_1-|x|}{|x|(c|x|-x_1)^2},
\end{align}
where $|x|=\sqrt{x_1^2+x_2^2+x_3^2}$ and $c$ is an arbitrary constant such that $|c|>1$. The functions $v_c $ and $p_c$ defined in \rf{L sol} satisfy \rf{NS eq} with $F\equiv 0$ in the pointwise sense for every $x\in \bbfR^3 \setminus \{0\}$. 
On the other hand, if one treats them as a distributional or generalized solution to \rf{NS eq} in the whole $\bbfR^3$, they correspond to the very singular external force $F=(b(c)\delta _0, 0,0)$, where the parameter $b\not =0$ depends on $c$ and $\delta _0$ stands for the Dirac measure. 
Indeed, in \cite[Proposition 2.1.]{CK} (see also \cite[p. 206]{B}), it was shown that for every test function $\varphi \in C_c^\infty (\bbfR^3)$ the following equalities hold true
\begin{align*}
   \int_{\bbfR^3}v_c(x)\cdot \nabla \varphi (x)\ud x &=0 \\
\intertext{and}
   \int_{\bbfR^3} \Big(\nabla v_c^k\cdot \nabla \varphi -v_c^kv_c\cdot \nabla \varphi -p_c \frac{\partial }{\partial x_k} \varphi \Big)\ud x&=
   \begin{cases}
   b(c)\varphi(0) & \text{if } \ k=1,\\
   0& \text{if } \ k=2,3,
   \end{cases}
\end{align*}
where 
\begin{equation}\label{bc}
   b(c)=\frac{8\pi c}{3(c^2-1)}\Big( 2+6c^2 -3c(c^2-1)\log\Big(\frac{c+1}{c-1}\Big) \Big).
\end{equation}
In particular, the function $b=b(c)$ is decreasing on $(-\infty , -1)$ and $(1, +\infty )$. Moreover, $\lim_{c\rightarrow  1}b(c)=+\infty $, $\lim_{c\rightarrow  -1}b(c)=-\infty $ and $\lim_{|c|\rightarrow  \infty }b(c)=0$.

These explicit stationary solutions to \rf{NS eq}--\rf{NS div} were first calculated by Landau \cite{Landau} and now they can be found in standard textbooks (see {\it e.g.} \cite[p. 82]{LL} and \cite[p. 206]{B}). Let us also recall that the stationary solutions \rf{L sol} were also independently found by Squire \cite{Sq} and discussed in \cite{CK,TX} from a slightly different point of view. The main idea of Landau's calculation is that if we impose the additional axi-symmetry requirement, the stationary Navier-Stokes system
\begin{equation}\label{stat}
   -\Delta u+(u\cdot \nabla )u+\nabla p=0, \qquad {\rm div}\  u=0,
\end{equation}
 reduces to a system of ODEs which can be solved explicitly in terms of elementary functions. Moreover, \v{S}ver\'ak \cite{Sv} proved recently that even if we drop the requirement of axi-symmetry, then the Landau solutions \rf{L sol} are still the only solutions of \rf{stat} which are invariant under the natural scaling. More precisely, he proved that if $u:\bbfR^3 \setminus \{0\} \rightarrow  \bbfR^3$ is a non-trivial smooth solution of  \rf{stat} satisfying $\lambda u(\lambda x) = u(x) $ for all $x\in \bbfR^3\setminus\{0\}$ and each $\lambda > 0$, then $(u,p)=(v_c,p_c)$ is given by formulas \rf{L sol} (modulo a rotation of $\bbfR^3$).

The goal of this work is to show that problem \rf{NS eq}--\rf{NS in d} has a weak solution for every initial datum of the form $u_0=v_c+w_0$, where $w_0\in L^2(\bbfR^3)$ and the external force $F=(b(c)\delta_0,0,0)$ with $b(c)$ defined in \rf{bc}, provided $|c|$ is sufficiently large. Moreover, this solution converges, as $t\rightarrow \infty$, towards the stationary solution \rf{L sol}. In other words, we show that the flow described by the Landau solution is, in some sense, asymptotically stable under any $L^2$-perturbation.

The existence and stability of stationary solutions corresponding to nontrivial external forces are well understood in the case of bounded domains, see for example \cite{DG}. For related results in exterior domains, we refer the reader to \cite{F3,H} and to the references therein. The existence and the stability of stationary solutions in $L^p$ with $p \geqslant n$, where $n$ is the dimension of the space, is obtained in \cite{S}, under the condition that the Reynolds number is sufficiently small, and in \cite{CK, CK2, KY1, KY2,Y} under the assumption that the external force is sufficiently small. The stability of small stationary solutions of \rf{NS eq}--\rf{NS in d} in $L^p(\bbfR^3)$ with $p<3$ has been studied recently in \cite{BS, BBIS}.

\textbf{Notation.} In this work, the usual norm of the Lebesgue space $L^p (\bbfR^3)$ is denoted by $\|\cdot\|_p$ for any $p \in [1,\infty]$. $C^\infty_c (\bbfR^3)$ denotes the set of smooth and compactly supported functions. Here, we work with the Sobolev space $H^1(\bbfR^3 )=\{ f\in L^2(\bbfR^3 ): \nabla f \in L^2(\bbfR^3 )\}$ and with its homogeneous counterpart $\dot H^1(\bbfR^3 )=\{ f\in L^1_{loc} (\bbfR^3 ): \nabla f\in L^2(\bbfR^3 )\}$. We use the following notation for the Banach spaces of divergence free vector fields: $L^p_\sigma (\bbfR^3)  =\{ u\in \big( L^p(\bbfR^3)\big)^3 : \textrm{div}\ u=0 \}$ and $\dot H^1_\sigma (\bbfR^3) = \{ u\in \big( \dot H^1(\bbfR^3)\big)^3 : \textrm{div } u=0\}$ supplemented with usual norms. The constants (always independent of $x$ and $t $) will be denoted by the same letter $C$, even if they vary from line to line.

%%%%%%%%%%%%%%%%%%%%%%%%%%%%%%%%%%%%%%%%%%%%%%%%%%%%%%%%%%%%%%%%%%%%%%%%%%%%%%%%%%%%%%%%%%%%%%%%%%%%%%%%%%%%%%%%%%%%%%%%%%%%%%%%%%%%%%%%%%%%%%%%%%%%%%%%
\section{Results and comments}
\setcounter{equation}{0}

We denote by $u=u(x,t)$ a solution of the Navier--Stokes system \rf{NS eq}--\rf{NS in d} with the external force $F=b(c)\delta _0$, where $b(c)$ is defined in \rf{bc}, and the initial datum $u_0=v_c +w_0$, where $v_c$ is the singular stationary solution \rf{L sol} and $w_0\in L^2_\sigma (\bbfR^3)$. Then the functions $w(x,t)=u(x,t)-v_c(x)$  and $\pi (x)=p(x)-p_c(x)$ satisfy the initial value problem
\begin{align}
   \label{w:eq} w_t-\Delta w+(w\cdot  \nabla )w+ (w\cdot  \nabla )v_c +(v_c \cdot \nabla )w+\nabla \pi&=0,\\
   \label{w:d}   {\rm div}\  w&=0,\\
   \label{w:ini} w(x,0)&=w_0(x)\ .
\end{align} 
The goal of this work is to show the existence of a global-in-time weak solution to problem \rf{w:eq}--\rf{w:ini} in a usual energy space (see \rf{XT} below) and to study its convergence in $L^2_\sigma (\bbfR^3)$ as $t\rightarrow \infty$ zero. As in the classical work by Leray \cite{L}, these solutions satisfy a suitable energy inequality. Here, however, in the proof of the $L^2$-decay of solutions to \rf{w:eq}--\rf{w:ini}, we need a strong energy inequality, introduced by Masuda \cite{Mas} for the Navier-Stokes system \rf{NS eq}--\rf{NS in d}.

In our analysis, the crucial role is played by the Hardy-type inequality
\begin{equation}\label{hardy:0}
   \bigg|\int_{\bbfR^3} w\cdot (w\cdot \nabla )v_c \ud x \bigg|\leqslant K(c)\|\nabla \otimes w\|_2^2 ,
\end{equation}
which is valid for all $w\in \dot H^1(\bbfR^3)$. Here, the function $K=K(c)>0$ satisfies $\lim_{|c|\rightarrow 1}K(c)=+\infty$ and $\lim_{|c|\rightarrow +\infty}K(c)=0$ (see Theorem \ref{l:H}, below), hence, there exists $c_0>1$ such that
\begin{equation}\label{c0}
   K(c)<1 \quad \text{\rm for all  $\in \bbfR$ satisfying }\quad |c|\geqslant c_0>1 \, .
\end{equation} 
In the next section, we deduce inequality \rf{hardy:0} from the classical Hardy inequality 
\begin{equation}\label{hardy}
   \int_{\bbfR^3}\frac{|w(x)|^2}{|x|^2} \ud x \leqslant 4 \int_{\bbfR^3}|\nabla w(x)|^2 \ud x \ \ \textrm{for all } w\in \dot H(\bbfR^3)
\end{equation}
which proof can be found {\it e.g.} in \cite[Ch. I. 6]{L}.
 
First, we state the counterpart of the Leray result on the existence of weak solutions to the initial value problem \rf{w:eq}--\rf{w:d}.

\begin{theorem}\label{th:ex}
Assume that $c_0>1$ satisfies \rf{c0}. For each $c\in \bbfR$ such that $|c|>c_0$, every $w_0 \in L^2_\sigma (\bbfR^3 )$, and every $T>0$ 
problem \rf{w:eq}--\rf{w:ini} has a weak solution in the energy space
\begin{equation}\label{XT}
  X_T=L^{\infty }_w \big( [0,T], L^2_\sigma (\bbfR^3)\big)\cap L^2\big([0,T], \dot H_\sigma^1(\bbfR^3)\big) ,
\end{equation}
which satisfies the strong energy inequality
\begin{equation}\label{en:inq}
   \|w(t)\|_2^2+ 2(1-K(c))\int_s^t \|\nabla \otimes w(\tau )\|_2^2 \ud \tau \leqslant \|w (s)\|_2^2\ 
\end{equation}
for almost all $s\geqslant 0$, including $s=0$ and all $t\geqslant s$.
\end{theorem}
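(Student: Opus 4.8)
The plan is to construct the weak solution by a Galerkin approximation combined with the \emph{a priori} estimate coming from the Hardy-type inequality \rf{hardy:0}. First I would fix an orthonormal basis $\{\psi_j\}_{j=1}^\infty$ of $L^2_\sigma(\bbfR^3)$ consisting of smooth divergence-free vector fields (e.g. obtained from eigenfunctions of the Stokes operator on a sequence of exhausting balls, or directly in $\bbfR^3$ by a standard construction), seek finite-dimensional approximations $w_m(t)=\sum_{j=1}^m g_{j,m}(t)\psi_j$ satisfying the projected system associated to \rf{w:eq}, and solve the resulting ODE system for $(g_{j,m})$ locally in time by Picard's theorem. The key point is that the term $(w\cdot\nabla)v_c$ is, after integration by parts against $w_m$, controlled by $\rf{hardy:0}$, while the term $(v_c\cdot\nabla)w$ is antisymmetric and contributes nothing: testing the projected equation with $w_m$ gives
\begin{equation*}
   \tfrac12 \frac{d}{dt}\|w_m(t)\|_2^2 + \|\nabla\otimes w_m(t)\|_2^2
   = -\int_{\bbfR^3} w_m\cdot (w_m\cdot\nabla)v_c \ud x
   \leqslant K(c)\,\|\nabla\otimes w_m(t)\|_2^2 ,
\end{equation*}
because $\int (w_m\cdot\nabla)w_m\cdot w_m \ud x=0$ and $\int (v_c\cdot\nabla)w_m\cdot w_m \ud x = -\tfrac12\int (\operatorname{div} v_c)|w_m|^2 \ud x=0$. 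Since $|c|>c_0$ forces $K(c)<1$ by \rf{c0}, this yields
\begin{equation*}
   \|w_m(t)\|_2^2 + 2(1-K(c))\int_0^t \|\nabla\otimes w_m(\tau)\|_2^2 \ud\tau \leqslant \|w_m(0)\|_2^2 \leqslant \|w_0\|_2^2,
\end{equation*}
so the approximations are global in time and uniformly bounded in $X_T$.

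Next I would extract a subsequence converging weakly-$*$ in $L^\infty([0,T],L^2_\sigma)$ and weakly in $L^2([0,T],\dot H^1_\sigma)$ to some $w\in X_T$. Passing to the limit in the linear terms is immediate; the nonlinear term $(w_m\cdot\nabla)w_m$ requires strong convergence in $L^2_{loc}$, which I would get from the Aubin--Lions--Simon compactness lemma after bounding $\partial_t w_m$ in a negative-order space: from the equation, $\partial_t w_m$ is bounded in $L^{4/3}([0,T],H^{-s})$ on bounded sets for suitable $s$, using that $\|(w_m\cdot\nabla)w_m\|$ and $\|(v_c\cdot\nabla)w_m\|$, $\|(w_m\cdot\nabla)v_c\|$ are controlled in appropriate dual norms via Sobolev embedding, Hardy's inequality \rf{hardy}, and the fact that $v_c(x)$, $\nabla v_c(x)$ decay like $|x|^{-1}$, $|x|^{-2}$ away from the origin. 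The term involving $v_c$ near the singularity at the origin is handled by Hardy: $\||x|^{-1}w_m\|_2 \leqslant 2\|\nabla w_m\|_2$, so $(w_m\cdot\nabla)v_c$ and $(v_c\cdot\nabla)w_m$ lie in $L^1_{loc}$ uniformly. With strong $L^2_{loc}$ convergence of $w_m$ and weak $L^2$ convergence of $\nabla w_m$, standard arguments identify the limit of the nonlinear term, and one verifies that $w$ is a weak solution of \rf{w:eq}--\rf{w:ini} in the usual distributional sense, with $w(0)=w_0$ recovered from weak continuity.

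Finally, for the strong energy inequality \rf{en:inq} I would first note that the Galerkin approximations satisfy the identity above with ``$=$'' replaced by ``$\leqslant$'' after dropping $(1-K(c))\|\nabla\otimes w_m\|_2^2\geqslant 0$ appropriately, namely for all $0\leqslant s\leqslant t$,
\begin{equation*}
   \|w_m(t)\|_2^2 + 2(1-K(c))\int_s^t \|\nabla\otimes w_m(\tau)\|_2^2 \ud\tau \leqslant \|w_m(s)\|_2^2 .
\end{equation*}
Passing to the limit, weak lower semicontinuity of the $L^2$-norm and of the $L^2([0,T],\dot H^1)$-seminorm gives the inequality at $t$ and at a.e. $s$ (the values $s$ for which $\|w_m(s)\|_2^2\to\|w(s)\|_2^2$, which holds for a.e. $s$ by strong convergence in $L^2([0,T],L^2_{loc})$ upgraded via the uniform energy bound, or more simply by a standard Lebesgue-point argument). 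Including $s=0$ uses $w_m(0)\to w_0$ strongly in $L^2$. The main obstacle I anticipate is the passage to the limit in the nonlinear term together with the singular lower-order terms: one must be careful that the local compactness argument genuinely controls the contribution near $x=0$, where $v_c$ and $\nabla v_c$ blow up, and this is exactly where the Hardy inequality \rf{hardy} (hence the hypothesis $w\in\dot H^1$) does the essential work; everything else is a routine adaptation of Leray's construction.
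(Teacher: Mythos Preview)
Your proposal is correct and follows essentially the same approach as the paper: Galerkin approximations, the \emph{a priori} energy estimate obtained by testing with $w_m$ and invoking the Hardy-type inequality \rf{hardy:0} (with the cancellation of the $(v_c\cdot\nabla)w_m$ term via ${\rm div}\,v_c=0$), and then the standard compactness/limit passage, for which the paper simply cites Temam's monograph. You supply considerably more detail on the Aubin--Lions step and the recovery of the strong energy inequality at a.e.\ $s$, but the strategy is identical.
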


Recall that, following a classical approach, a function $w\in X_T$ is a weak solution of problem \rf{w:eq}--\rf{w:ini}
if 
\begin{equation}\label{weak}
\begin{split}
\big(  w(t), \varphi(t)  \big)
+&\int_s^t \Big[ 
\big( \nabla w, \nabla \varphi\big)
+ \big( w\cdot  \nabla w, \varphi\big)
+ \big(w\cdot  \nabla v_c , \varphi\big)
+\big(v_c \cdot \nabla w, \varphi\big) 
 \Big]\,d\tau \\
 &= \big(w(s), \varphi(s)\big) + \int_s^t \big(w, \varphi_\tau \big)\, d\tau
 \end{split}
\end{equation}
for all $t\geq s\geq 0$ and all $\varphi \in C([0, \infty), H_\sigma^1(\bbfR^3))\cap C^1([0,\infty), L^2_\sigma (\bbfR^3))$, where
$(\cdot, \cdot)$ is the standard $L^2$-inner product.   Notice that each term 
in \eqref{weak}  containg the singular function $v_c$ 
is convergent due to the Hardy inequality  \eqref{hardy}, see calculations in \eqref{vc:est:1}-\eqref{vc:est:2}, below.

The proof of Theorem \ref{th:ex} follows the well-known argument which we recall in Section~\ref{existence}. Here, we only recall that the most general result on the existence of weak solutions to the Navier-Stokes system in the exterior domain satisfying the strong energy inequality was proved by Miyakawa and Sohr \cite{MS}.

The decay in $L^2(\bbfR^3)$ of weak solutions from Theorem \ref{th:ex} is the main result of this work.
\begin{theorem}\label{lim w:th} 
Every weak solution $w=w(x,t)$ to problem \rf{w:eq}--\rf{w:ini} satisfying the strong energy inequality \rf{en:inq} has the property: $\lim_{t\rightarrow \infty } \|w(t)\|_2=0$ .
\end{theorem}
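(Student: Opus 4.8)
The plan is to adapt the classical Kato--Masuda--Schonbek argument for the $L^2$-decay of Leray--Hopf solutions, using the strong energy inequality \rf{en:inq} together with the Hardy-type bound \rf{hardy:0} to absorb the singular terms involving $v_c$. First I would observe that, since $K(c)<1$, the strong energy inequality forces $\int_0^\infty \|\nabla\otimes w(\tau)\|_2^2\,\ud\tau<\infty$ and $t\mapsto \|w(t)\|_2^2$ is nonincreasing (up to a null set), so the limit $L:=\lim_{t\to\infty}\|w(t)\|_2^2$ exists; the goal is to show $L=0$. The standard route is the Fourier-splitting method: one estimates $\frac{d}{dt}\|w(t)\|_2^2$ by splitting the Fourier integral $\int |\xi|^2|\hat w(\xi,t)|^2\,d\xi$ over a ball $\{|\xi|\le \rho(t)\}$ and its complement, which yields a differential inequality of the form $\frac{d}{dt}\|w(t)\|_2^2 + c\,\rho(t)^2\|w(t)\|_2^2 \le c\,\rho(t)^2 \int_{|\xi|\le\rho(t)}|\hat w(\xi,t)|^2\,d\xi$ after using $2(1-K(c))>0$ to keep the good dissipative term. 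Here one must be slightly careful because the equation for $w$ is only an inequality and the extra linear terms $(w\cdot\nabla)v_c$ and $(v_c\cdot\nabla)w$ are present; the first is controlled in energy via \rf{hardy:0}, and the second integrates to zero against $w$ after integration by parts (using $\mathrm{div}\,v_c=0$), so in fact neither disturbs the energy balance beyond the constant $1-K(c)$ already built into \rf{en:inq}.

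The more delicate point is estimating the low-frequency part $\int_{|\xi|\le\rho(t)}|\hat w(\xi,t)|^2\,d\xi$. I would take the mild/Duhamel formulation on the Fourier side, writing $\hat w(\xi,t) = e^{-|\xi|^2 t}\hat w_0(\xi) - \int_0^t e^{-|\xi|^2(t-\tau)}\,\widehat{\mathbb{P}\,\mathrm{div}(\cdots)}(\xi,\tau)\,d\tau$, where the nonlinear/linear flux collects $w\otimes w$, $w\otimes v_c$ and $v_c\otimes w$. Because these flux terms are of the form $\nabla\cdot(\text{something})$, their Fourier symbol carries a factor $|\xi|$, so for $|\xi|\le\rho(t)$ one gains smallness: $|\widehat{\mathbb{P}\,\mathrm{div}(\cdots)}(\xi,\tau)|\lesssim |\xi|\,(\|w(\tau)\|_2\|w(\tau)\|_6 + \|w(\tau)\|_2\,\||x|^{-1}w(\tau)\|_2 + \cdots)$, and by Sobolev and Hardy this is $\lesssim |\xi|\,\|w(\tau)\|_2\|\nabla w(\tau)\|_2$; the $L^1$-in-space norms needed for the Fourier bound are finite because $w\otimes v_c\in L^1$ follows from $w\in L^2$, $v_c\in L^3_{\rm weak}$ and the Hardy inequality. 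One thus obtains $\int_{|\xi|\le\rho(t)}|\hat w|^2\,d\xi \lesssim \rho(t)^3\|w_0\|_2^2 + \rho(t)^5\big(\int_0^t \|w(\tau)\|_2\|\nabla w(\tau)\|_2\,d\tau\big)^2$, and using $\int_0^\infty\|\nabla w\|_2^2<\infty$ together with the already-known boundedness of $\|w(\tau)\|_2$, the time integral grows at most like $t^{1/2}$ times a decaying factor.

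Plugging this into the differential inequality with the standard choice $\rho(t)^2 = \beta/(1+t)$ for a suitable constant $\beta$, one multiplies through by $(1+t)^\beta$ and integrates to get an algebraic decay rate, say $\|w(t)\|_2^2 \le C(1+t)^{-1/2}$ (the exact exponent is immaterial). In particular $L=\lim_{t\to\infty}\|w(t)\|_2^2 = 0$, which is the claim. The main obstacle I anticipate is the rigorous justification that $w$ actually satisfies the Duhamel identity in a form strong enough to bound $\hat w$ pointwise in $\xi$ — weak solutions need not a priori satisfy the integral equation. The clean way around this is to avoid Duhamel altogether and run a purely energy-based Fourier-splitting argument directly from the weak formulation \rf{weak}: test against $\varphi$ chosen so that $\hat\varphi$ is supported in $\{|\xi|\le\rho\}$, estimate $\frac{d}{dt}\int_{|\xi|\le\rho}|\hat w|^2\,d\xi$ by the flux bound above, and combine with \rf{en:inq}; this circumvents the regularity issue at the cost of a slightly longer bookkeeping, and it is essentially Wiegner's refinement of Schonbek's method, which applies verbatim once the Hardy inequality has been used to show the $v_c$-terms behave like the quadratic term.
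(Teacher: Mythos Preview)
Your Fourier-splitting outline has a real gap at the low-frequency estimate for the linear $v_c$-terms. You assert that $w\otimes v_c\in L^1(\bbfR^3)$ ``follows from $w\in L^2$, $v_c\in L^{3,\infty}$ and the Hardy inequality'', and then use this to get a pointwise bound $|\widehat{\mathbb{P}\,\mathrm{div}(w\otimes v_c)}(\xi)|\lesssim |\xi|$ for small $|\xi|$. But none of those ingredients yields $L^1$: weak H\"older gives $w\,v_c\in L^{6/5,\infty}$, and Hardy gives $|x|^{-1}w\in L^2$, hence $w\,v_c\in L^2$; neither implies $L^1$. In fact $\int_{\bbfR^3}|w|\,|x|^{-1}\,\ud x$ is generically infinite for $w\in H^1(\bbfR^3)$ because $|x|^{-1}\notin L^2(\bbfR^3)$ at infinity. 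So the pointwise bound on $\widehat{w\otimes v_c}(\xi)$ is not available, and the heat-Duhamel low-frequency estimate as written does not go through.

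If instead you try to use the $L^2$ bound $\|w\,v_c\|_2\lesssim\|\nabla w\|_2$ and Minkowski, the contribution of the $v_c$-flux to $\int_{|\xi|\le\rho}|\hat w|^2$ is of order $\rho^2\big(\int_0^t\|\nabla w(\tau)\|_2\,\ud\tau\big)^2\lesssim \rho^2 t$; with $\rho^2\sim (1+t)^{-1}$ this is $O(1)$ and does not decay, so the differential inequality fails to close. The underlying reason is that the perturbation $v_c$ is critical (homogeneous of degree $-1$) and cannot be treated as a small forcing of the heat flow; it must be absorbed into the linear operator. This is exactly what the paper does: it builds the linearized generator $\mathcal{L}=-\Delta+\mathbb{P}\big((\,\cdot\,\cdot\nabla)v_c+(v_c\cdot\nabla)\,\cdot\,\big)$, shows via the Hardy-type inequality that $-\mathcal{L}$ generates an analytic contraction semigroup on $L^2_\sigma$, and then runs a Duhamel/duality argument with $\se^{-t\mathcal{L}^*}$ so that only the genuinely quadratic term $(w\cdot\nabla)w$ appears as forcing. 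The nonlinear term is then handled by a Borchers--Miyakawa weak-$L^p$ convolution estimate, and one first proves decay of the Ces\`aro mean $t^{-1}\int_0^t\|w(s)\|_2\,\ud s$ and upgrades to pointwise decay using the monotonicity from the strong energy inequality. Your energy observations in the first paragraph are fine, but to salvage the argument you would need to replace the heat semigroup by $\se^{-t\mathcal{L}}$ (which has no simple Fourier description), at which point the Fourier-splitting machinery no longer applies directly and you are led to the paper's approach.
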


Under additional assumptions on initial data, we find also the decay rate of $\|w(t)\|_2$.
\begin{corollary}\label{cor. decay}
Under the assumptions of Theorem \ref{lim w:th} if, moreover, $w_0 \in L^p(\bbfR^3) \cap L_\sigma^2(\bbfR^3)$ for some $p\in (\frac{6}{5},2)$, then there exists $C>0$ such that 
\begin{equation}\label{decay}
    \|w(t)\|_2 \leqslant Ct^{-\frac{3}{2}(\frac{1}{p}-\frac{1}{2})}
\end{equation}
for all $t>0$.
\end{corollary}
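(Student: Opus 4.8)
I would deduce the rate \rf{decay} by combining Theorem~\ref{lim w:th} with decay estimates for the semigroup $\{S_c(t)\}_{t\geqslant0}$ generated on $L^2_\sigma(\bbfR^3)$ by minus the linearized operator $\mathcal L_c w = -\mathbb P\Delta w + \mathbb P\big((v_c\cdot\nabla)w\big) + \mathbb P\big((w\cdot\nabla)v_c\big)$, $\mathbb P$ being the Leray projection. The heart of the matter is to prove
\begin{equation}\label{sem}
  \|S_c(t)f\|_2 \leqslant C\,t^{-\frac32(\frac1p-\frac12)}\,\|f\|_p,\qquad f\in L^p\cap L^2_\sigma(\bbfR^3),\quad p\in(\tfrac65,2),
\end{equation}
together with the companion estimate $\|S_c(t)\,\mathbb P\,\nabla\!\cdot g\|_2\leqslant C t^{-\frac12-\frac32(\frac1q-\frac12)}\|g\|_q$ for $q\in(\tfrac65,2)$. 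Given these, one writes the Duhamel formula $w(t)=S_c(t)w_0-\int_0^tS_c(t-s)\,\mathbb P\,\nabla\!\cdot\!\big(w(s)\otimes w(s)\big)\,\ud s$ and runs a Fujita--Kato type bootstrap.

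The semigroup estimates come from the energy method. For $\bar w(t)=S_c(t)f$, testing the linearized equation with $\bar w$ and using that $\big((v_c\cdot\nabla)\bar w,\bar w\big)=0$ (since $\textrm{div}\,v_c=0$) together with the Hardy-type inequality \rf{hardy:0} gives $\tfrac{\ud}{\ud t}\|\bar w\|_2^2+2(1-K(c))\|\nabla\otimes\bar w\|_2^2\leqslant0$, hence $\|\bar w(t)\|_2\leqslant\|f\|_2$ and $\int_0^\infty\|\nabla\otimes\bar w\|_2^2\,\ud s<\infty$. Testing instead with $-\Delta\bar w$ (licit for $t>0$, where $\bar w$ is smooth) and integrating by parts yields
\begin{equation*}
  \tfrac{\ud}{\ud t}\|\nabla\otimes\bar w\|_2^2+2\|\Delta\bar w\|_2^2 = 2\big((v_c\cdot\nabla)\bar w,\Delta\bar w\big)+2\big((\bar w\cdot\nabla)v_c,\Delta\bar w\big),
\end{equation*}
and the two terms on the right are controlled via $|\nabla v_c(x)|\leqslant C(c)|x|^{-2}$, the Hardy inequality $\int|x|^{-2}|\nabla\otimes\bar w|^2\,\ud x\leqslant4\|\Delta\bar w\|_2^2$, and the Hardy--Rellich inequality $\int|x|^{-4}|\bar w|^2\,\ud x\leqslant16\|\Delta\bar w\|_2^2$ (both valid on $\bbfR^3$); for $|c|$ large enough they are absorbed into $2\|\Delta\bar w\|_2^2$, giving after multiplication by $t$ the smoothing bound $\|\nabla\otimes S_c(t)f\|_2\leqslant Ct^{-1/2}\|f\|_2$. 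The Sobolev embedding $\dot H^1(\bbfR^3)\hookrightarrow L^6(\bbfR^3)$ and interpolation then produce $\|S_c(t)f\|_r\leqslant Ct^{-\frac32(\frac12-\frac1r)}\|f\|_2$ for $r\in[2,6]$. Since the adjoint operator $\mathcal L_c^*\phi=-\mathbb P\Delta\phi-\mathbb P\big((v_c\cdot\nabla)\phi\big)+\mathbb P\big((\nabla v_c)^{\!\top}\phi\big)$ has the same structure (skew transport term, zeroth-order term controlled by \rf{hardy:0}), $S_c^*(t)$ obeys the same bounds, and \rf{sem} follows by duality, $\|S_c(t)f\|_2=\sup_{\|g\|_2\leqslant1}|(f,S_c^*(t)g)|\leqslant\|f\|_p\sup_{\|g\|_2\leqslant1}\|S_c^*(t)g\|_{p'}$; iterating the smoothing bound gives the companion estimate for $S_c(t)\mathbb P\nabla\!\cdot$.

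Inserting \rf{sem} and its companion into the Duhamel formula, the linear term $S_c(t)w_0$ decays like $t^{-\gamma}$ with $\gamma=\tfrac32(\tfrac1p-\tfrac12)$, while the bilinear term is dominated by $\int_0^t(t-s)^{-\frac12-\frac32(\frac1q-\frac12)}\|w(s)\|_{2q}^2\,\ud s$ with $\|w(s)\|_{2q}^2\lesssim\|w(s)\|_2^{2(1-\theta)}\|\nabla\otimes w(s)\|_2^{2\theta}$ by Gagliardo--Nirenberg; thanks to $\|w(t)\|_2\to0$ (Theorem~\ref{lim w:th}), $\int_0^\infty\|\nabla\otimes w\|_2^2<\infty$, and the choice $q>\tfrac65$ (which keeps the time kernel locally integrable), this term is of lower order, so a short bootstrap gives $\|w(t)\|_2\leqslant Ct^{-\gamma}$ for $t\geqslant1$; for $t\in(0,1]$ one just uses $\|w(t)\|_2\leqslant\|w_0\|_2$. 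This is \rf{decay}.

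The main obstacle is the $\dot H^1$-smoothing step: controlling the two $v_c$-terms by the dissipation $\|\Delta\bar w\|_2^2$ is a scale-critical estimate in dimension three, resting on the Hardy and Hardy--Rellich inequalities and on the smallness of $\sup_{|x|=1}|x|^2|\nabla v_c(x)|$ for large $|c|$; once this is in hand, the $L^p$--$L^2$ decay \rf{sem} and the treatment of the nonlinearity are routine perturbation arguments (and it is the nonlinear estimate, via the local integrability of the kernel above, that forces the restriction $p>\tfrac65$).
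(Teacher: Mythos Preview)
Your overall strategy---prove the $L^p$--$L^2$ hypercontractive bound \eqref{sem} for the linear semigroup by duality from an $L^2$--$L^{p'}$ bound on the adjoint, then feed it into a Duhamel/bootstrap argument---is the same as the paper's (compare Proposition~\ref{th:hiper} and the proof of Corollary~\ref{cor. decay}). The problem is the way you obtain the $\dot H^1$ smoothing step. The inequality you invoke,
\[
   \int_{\bbfR^3}\frac{|\bar w|^2}{|x|^{4}}\ud x \leqslant 16\,\|\Delta \bar w\|_2^2,
\]
is the Hardy--Rellich inequality, and it is \emph{false} in dimension three: for any $u\in C_c^\infty(\bbfR^3)$ with $u(0)\neq 0$ the left side diverges ($|x|^{-4}$ is not locally integrable in $\bbfR^3$) while the right side is finite. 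Since nothing forces $S_c(t)f$ to vanish at the origin, your absorption of the term $\big((\bar w\cdot\nabla)v_c,\Delta\bar w\big)$ into the dissipation $\|\Delta\bar w\|_2^2$ collapses. (The companion bound $\int|x|^{-2}|\nabla\otimes\bar w|^2\leqslant 4\|\Delta\bar w\|_2^2$ that you use for the transport term is fine---it is just Hardy applied to $\nabla\bar w$---but on its own it does not handle the zeroth-order piece.)

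The paper sidesteps this entirely by never testing against $-\Delta\bar w$. Instead it shows that the sesquilinear form satisfies $a_{\mathcal L}(z,z)\geqslant(1-K(c))\|\nabla\otimes z\|_2^2$ (inequality \eqref{L:inq2}), which yields $\|\nabla\otimes z\|_2\leqslant C\|\mathcal L^{1/2}z\|_2$ (Corollary~\ref{l:sqrt L}); the smoothing $\|\nabla\otimes\se^{-t\mathcal L}z_0\|_2\leqslant Ct^{-1/2}\|z_0\|_2$ then follows from the abstract analytic-semigroup bound $\|\mathcal L^{1/2}\se^{-t\mathcal L}\|\leqslant Ct^{-1/2}$ (Corollary~\ref{sg prop:c}). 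This route uses only the first-order Hardy inequality \eqref{hardy}, which \emph{is} valid in $\bbfR^3$. Once you have the smoothing, your duality argument for \eqref{sem} goes through exactly as in Proposition~\ref{th:hiper}. For the nonlinear part the paper also proceeds differently: rather than a direct Duhamel bootstrap (which, incidentally, requires some care for Leray--Hopf weak solutions), it works with the time-averaged quantity $t^{-1}\!\int_0^t\|w(s)\|_2\,\ud s$, uses the weak-$L^p$ convolution Lemma~\ref{l:weak Y}, and then recovers the pointwise decay from the monotonicity of $\|w(t)\|_2$ given by the strong energy inequality (see \eqref{int:lim1}--\eqref{int:lim3}).
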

%%%%%%%%%%%%%%%%%%%%%%%%%%%%%%%%%%%%%%%%%%%%%%%%%%%%%%%%%%%%%%%%%%%%%%%%%%%%%%%%%%%%%%%%%%%%%%%%%%%%%%%%%%%%%%%%%%%%%%%%%%%%%%%%%%%%%%%%%%%%%%%%%%%%%%%
\section{Hardy-type inequality and existence of weak solutions}\label{existence}
\setcounter{equation}{0}

First, we prove elementary pointwise estimates of the components of the matrix $\nabla v_c$.

\begin{lemma}\label{l:K jk}
Let $|c|>1$. There exist functions $K_{j,k}: (-\infty ,-1)\cup (1, \infty )\rightarrow (0, \infty )$ for every $j, k \in \{1,2,3\}$ such that for all $x\in \bbfR^3 \setminus \{0\}$, we have
\begin{equation}\label{K jk}
   \bigg|\partial_{x_j}v_c ^k(x)\bigg|\leqslant \frac{K_{j,k}(c)}{|x|^2} .
\end{equation}
Moreover, functions $K_{j,k}=K_{j,k}(c)$ have the following properties: $\lim_{|c|\rightarrow 1}K_{j,k}(c)=+\infty$ and $\lim_{|c|\rightarrow +\infty}K_{j,k}(c)=0$ for all $j,k\in \{1,2,3\}$.
\end{lemma}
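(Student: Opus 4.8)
The plan is to verify \rf{K jk} by direct differentiation of the explicit formulas \rf{L sol} and then reading off the constants. The key structural observation is that each component $v_c^k(x)$ is homogeneous of degree $-1$: one has $v_c^k(\lambda x)=\lambda^{-1}v_c^k(x)$ for all $\lambda>0$, since $|x|$, $c|x|-x_1$, and the numerators scale the obvious way. Consequently each derivative $\partial_{x_j}v_c^k$ is homogeneous of degree $-2$, so it suffices to bound $|\partial_{x_j}v_c^k(x)|$ on the unit sphere $\{|x|=1\}$; calling the supremum there $K_{j,k}(c)$ gives \rf{K jk} immediately, because for general $x\neq 0$ we write $x=|x|\omega$ with $|\omega|=1$ and obtain $|\partial_{x_j}v_c^k(x)|=|x|^{-2}|\partial_{x_j}v_c^k(\omega)|\leqslant K_{j,k}(c)|x|^{-2}$.

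The next step is to see that this sphere supremum is finite and to track its dependence on $c$. On $\{|x|=1\}$ the only possible singularity of $\nabla v_c$ comes from the factor $(c|x|-x_1)^{-3}$ (differentiating produces one more power in the denominator than in \rf{L sol}), and on the unit sphere $c|x|-x_1=c-x_1$. For $c>1$ we have $c-x_1\geqslant c-1>0$, and for $c<-1$ we have $|c-x_1|\geqslant |c|-1>0$; in both cases the denominator is bounded away from zero uniformly on the sphere. The numerators, after differentiation, are polynomials in $x$ and $c$, hence bounded on the compact set $\{|x|=1\}$ by a polynomial in $|c|$. Therefore $K_{j,k}(c)$ is finite and can be taken of the form
\begin{equation*}
   K_{j,k}(c)\leqslant \frac{P_{j,k}(|c|)}{(|c|-1)^{3}}
\end{equation*}
for suitable polynomials $P_{j,k}$; in fact one can simply define $K_{j,k}(c):=\sup_{|x|=1}|\partial_{x_j}v_c^k(x)|$ and then extract the two limits from this closed form. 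Since $P_{j,k}$ has some fixed degree $d_{j,k}$, the ratio behaves like $(|c|-1)^{-3}\to+\infty$ as $|c|\to1$, giving $\lim_{|c|\to1}K_{j,k}(c)=+\infty$, and like $|c|^{d_{j,k}-3}$ as $|c|\to\infty$. To conclude $\lim_{|c|\to\infty}K_{j,k}(c)=0$ one must check $d_{j,k}\leqslant 2$, i.e. that the numerator of each $\partial_{x_j}v_c^k$, as a polynomial in $c$ with $|x|=1$, has degree at most $2$; this is visible from \rf{L sol}, where $c$ enters the numerators at most quadratically and differentiation in $x_j$ does not raise the $c$-degree, while the denominator contributes $c^3$.

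The only genuinely mechanical part, and the one where care is needed, is computing the nine derivatives $\partial_{x_j}v_c^k$ and confirming the degree-in-$c$ bookkeeping component by component — in particular for $v_c^1$, whose numerator $c|x|^2-2x_1|x|+cx_1^2$ already contains $|x|$, so that $\partial_{x_j}$ acting on it introduces factors $x_j/|x|$ that must be controlled on the sphere (they are bounded by $1$ there). I do not expect a real obstacle: all terms are elementary, the homogeneity reduces everything to a compact set, and the separation of the denominator from zero is exactly the hypothesis $|c|>1$. The cleanest write-up is to state the homogeneity scaling first, reduce to $|x|=1$, and then simply assert the pointwise bounds with the explicitly computed (or merely named) constants $K_{j,k}(c)$, displaying enough of the $v_c^1$ computation to make the $c$-asymptotics transparent and leaving the remaining eight analogous cases to the reader.
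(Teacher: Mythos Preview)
Your approach is essentially the paper's own: both exploit the degree-$(-1)$ homogeneity to reduce to the unit sphere (the paper does this implicitly via the substitution $s=x_1/|x|\in[-1,1]$) and then read off the $c$-dependence from the explicit formulas, using that $|c|>1$ keeps the denominator $c-s$ bounded away from zero. The paper streamlines the nine derivatives by first writing $v_c^k=\tfrac12\,p_c\,x_k$ (plus the extra term $2/(c|x|-x_1)$ for $k=1$), so that a single computation of $\nabla p_c$ does most of the work and yields explicit constants such as $4/(|c|-1)$ and $12|c|/(|c|-1)^2$, which are then \emph{taken} as the $K_{j,k}$.

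One logical slip to fix: having set $K_{j,k}(c):=\sup_{|x|=1}|\partial_{x_j}v_c^k(x)|$ and established the \emph{upper} bound $K_{j,k}(c)\leqslant P_{j,k}(|c|)/(|c|-1)^3$, you cannot deduce $\lim_{|c|\to1}K_{j,k}(c)=+\infty$ from the upper bound blowing up --- that is the wrong direction of the inequality. The cleanest repair, and exactly what the paper does, is to \emph{define} $K_{j,k}(c)$ to be the explicit ratio itself rather than the supremum; then both limits follow directly from the closed form (with $P_{j,k}(1)>0$ ensured by a crude enough choice of $P_{j,k}$). Alternatively, keep the supremum and exhibit a lower bound by evaluating a suitable $\partial_{x_j}v_c^k$ at the point $x=(\operatorname{sgn} c,0,0)$, where the denominator $(c|x|-x_1)$ equals $\pm(|c|-1)$.
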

 
\begin{proof}
It follows from the explicit formula for $v_c $ and $p_c$ (\textit{cf.} \rf{L sol}) that
\begin{equation}\label{v:p}
   v_c^1(x) = \frac{1}{2}p(x)x_1 +\frac{2}{c|x|-x_1},\quad
   v_c^2(x) = \frac{1}{2}p(x)x_2 ,\quad
   v_c^3(x) = \frac{1}{2}p(x)x_3 ,
\end{equation}
and
\begin{equation}\label{nabla p}
  \nabla p(x)=\frac{4}{|x|^3(c|x|-x_1)^3}\begin{pmatrix}(c^2-2)|x|^3+3c|x|^2x_1-3c^2|x|x_1^2+cx_1^3\\cx_2(2|x|^2-3c|x|x_1+x_1^2)\\cx_3(2|x|^2-3c|x|x_1+x_1^2)\end{pmatrix}.
\end{equation}
Moreover, using the expression for $p_c$ from \rf{L sol} and the notation $s=\nicefrac{x_1}{|x|}$, we obtain
\begin{equation*}
   |p_c(x)|\leqslant \frac{4}{|x|^2}\bigg|\sup_{s\in [-1,1]}\frac{cs-1}{(c-s)^2}\bigg|=k_p(c)\frac{1}{|x|^2},
\end{equation*}
where $k_p(c)=\frac{4}{|c|-1}$. In the same way by \rf{nabla p}, we have
\begin{align*}
  |x_i\partial_{x_1}p_c(x)|&\leqslant \frac{4}{|x|^2}\bigg|\sup_{s\in [-1,1]}\frac{cs^3-3c^2s^2+3cs+c^2-2}{(c-s)^3}\bigg|=k_{i,1}(c)\frac{1}{|x|^2}\\
\intertext{and}
  |x_i\partial_{x_2}p_c(x)|&\leqslant \frac{4c}{|x|^2}\bigg|\sup_{s\in [-1,1]}\frac{s^2-3cs+2}{(c-s)^3}\bigg| =k_{i,2}(c)\frac{1}{|x|^2},
\end{align*}
where $k_{i,1}=\frac{8}{1-|c|}$ and $k_{i,2}=\frac{12c}{(|c|-1)^2}$ for $i\in \{1,2,3\}$. Now, using the representation of $v_c$ in terms of $p_c$ from \rf{v:p}, we proceed in an analogous way to estimate all coefficients of the matrix $\{ \partial_{x_j}v_c^k(x)\}_{j,k=1}^3$. 
\end{proof}

The following theorem is the immediate consequence of Lemma \ref{l:K jk} and of the classical Hardy inequality \rf{hardy}.

\begin{theorem}[Hardy-type inequality]\label{l:H}
There exists a function $K:(-\infty ,-1)\cup (1, \infty )\rightarrow (0, \infty )$ with the following properties 
\begin{equation*}
\lim_{|c|\rightarrow 1}K(c)=+\infty \quad \textit{and }\quad \lim_{|c|\rightarrow +\infty}K(c)=0
\end{equation*}
such that for all vector fields  $w\in \dot H^1(\bbfR^3)$, we have $w\cdot (w\cdot \nabla )v_c \in L^1(\bbfR^3)$ together with the inequality
\begin{equation}\label{eq:H}
   \bigg|\int_{\bbfR^3} w\cdot (w\cdot \nabla )v_c \ud x \bigg|\leqslant K(c)\|\nabla \otimes w\|_2^2 .
\end{equation}
\end{theorem}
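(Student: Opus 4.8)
The plan is to combine the componentwise pointwise bounds of Lemma~\ref{l:K jk} with the scalar Hardy inequality~\rf{hardy} applied to each component of $w$. First I would expand the integrand in coordinates: for a vector field $w=(w^1,w^2,w^3)$,
\[
   w\cdot (w\cdot \nabla )v_c = \sum_{j,k=1}^3 w^k w^j\, \partial_{x_j}v_c^k ,
\]
so that, by the bound~\rf{K jk} and the elementary estimate $|w^j|\,|w^k|\leqslant |w|^2$,
\[
   \big| w\cdot (w\cdot \nabla )v_c \big| \leqslant \sum_{j,k=1}^3 K_{j,k}(c)\,\frac{|w^j(x)|\,|w^k(x)|}{|x|^2} \leqslant \Big(\sum_{j,k=1}^3 K_{j,k}(c)\Big)\frac{|w(x)|^2}{|x|^2}
\]
pointwise on $\bbfR^3\setminus\{0\}$.

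Then I would integrate this inequality and apply the classical Hardy inequality~\rf{hardy} to each scalar component: summing $\int_{\bbfR^3} |w^k(x)|^2/|x|^2\ud x \leqslant 4\|\nabla w^k\|_2^2$ over $k=1,2,3$ yields
\[
   \int_{\bbfR^3}\frac{|w(x)|^2}{|x|^2}\ud x \leqslant 4\,\|\nabla\otimes w\|_2^2 ,
\]
whose right-hand side is finite for every $w\in \dot H^1(\bbfR^3)$. This simultaneously shows that $w\cdot(w\cdot\nabla)v_c\in L^1(\bbfR^3)$ and that~\rf{eq:H} holds with the explicit choice $K(c)=4\sum_{j,k=1}^3 K_{j,k}(c)$. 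The claimed asymptotics of $K$ then follow at once from those in Lemma~\ref{l:K jk}, since a finite sum of positive functions each tending to $+\infty$ as $|c|\to 1$ and to $0$ as $|c|\to\infty$ has the same two limits.

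I do not expect any real obstacle here; the only point worth a word of care is that $\dot H^1(\bbfR^3)$ is precisely the natural class in which~\rf{hardy} is available (by density of $C_c^\infty$ together with $n=3>2$), which is what legitimizes the componentwise application. If one cared about the optimal constant one could instead retain the cross terms and estimate $\int_{\bbfR^3}|w^j||w^k|/|x|^2\ud x$ by Cauchy--Schwarz followed by Hardy, obtaining $4\|\nabla w^j\|_2\|\nabla w^k\|_2$, but this refinement is irrelevant for the qualitative statement of the theorem.
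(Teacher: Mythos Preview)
Your argument is correct and follows essentially the same route as the paper: invoke the pointwise bounds of Lemma~\ref{l:K jk} to reduce to $\int_{\bbfR^3}|w|^2/|x|^2\ud x$ and then apply the classical Hardy inequality~\rf{hardy}. The only cosmetic difference is that the paper bounds the cross terms via $|w_j||w_k|\leqslant\tfrac12(|w_j|^2+|w_k|^2)$ with $\tilde K(c)=\max_{j,k}K_{j,k}(c)$, arriving at $K(c)=12\tilde K(c)$, whereas you use $|w^j||w^k|\leqslant |w|^2$ and obtain $K(c)=4\sum_{j,k}K_{j,k}(c)$; both choices yield the required asymptotics.
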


\begin{proof}
Applying Lemma \ref{l:K jk}, we get
\begin{align*}
  H(w)&\equiv \bigg|\int_{\bbfR^3} w\cdot (w\cdot \nabla )v_c \ud x \bigg| \leqslant \sum_{j,k=1}^3\int_{\bbfR^3} |w_jw_k| |\partial_{x_j}v_c ^k|  \ud x\leqslant \int_{\bbfR^3}\frac{\tilde K(c)}{|x|^2} \sum_{j,k=1}^3|w_j| |w_k| \ud x,
\end{align*}
where $\tilde K(c)=\max_{j,k \in\{1,2,3\}}K_{j,k}(c)$. Using the elementary inequality $a\cdot b\leqslant (a^2+b^2)/2$, we obtain
\begin{equation*}
   H(w)\leqslant \frac{1}{2}\int_{\bbfR^3}\frac{\tilde K(c)}{|x|^2}\bigg( \sum_{j,k=1}^3|w_j|^2+ \sum_{j,k=1}^3|w_k|^2\bigg)  \ud x=3\tilde K(c)\int_{\bbfR^3}\frac{|w|^2}{|x|^2} \ud x .
\end{equation*}
Finally, from the classical Hardy inequality \rf{hardy}, we have $H(w)\leqslant  K(c)\|\nabla \otimes w\|_2^2$, where $K(c)=12 \tilde K(c)$, which completes the proof of Theorem \ref{l:H}.
\end{proof}

Now, we are in a position to sketch the construction of weak solutions to problem \rf{w:eq}--\rf{w:ini}.

\begin{proof}[Proof of Theorem \ref{th:ex}]
This is the standard reasoning based on the Galerkin method. Since $H_\sigma^1(\bbfR^3)$ is separable, there exists a sequence $g_1, ..., g_m,...$ which is free and total in $H_\sigma^1(\bbfR^3)$. For each $m$, we define an  approximate solution $w_m=\sum_{i=1}^{m}d_{im}(t)g_i$, which satisfy the following system of ordinary differential equations
\begin{equation}\label{w m}
\begin{split} 
  \big(w'_m(t),g_j\big)&+\big(\nabla w_m (t), g_j\big)+\big((w_m(t)\cdot  \nabla )w_m(t),g_j\big)+ \big((w_m(t)\cdot  \nabla )v_c,g_j\big) \\
&+\big((v_c \cdot \nabla )w_m(t),g_j\big)+\big(\nabla \pi, g_j\big)=0 \qquad \text{for}\quad j=1,...,m,
\end{split}
\end{equation}
where $(f,g)=\int_{\bbfR^3}f(x) \cdot g(x) \ud x$. 

Let us prove that both terms in \rf{w m} containing the singular functions $\nabla v_c$ and $v_c$ are convergent. First, using the estimates from Lemma \ref{l:K jk} as in the proof of Theorem~\ref{l:H}, we obtain
\begin{equation}\label{vc:est:1}
\begin{split} 
 \big((w_m(t)\cdot  \nabla )v_c,g_j\big) &\leqslant \sum_{k,\ell=1}^3 \int_{\bbfR^3} |w_m^k g_j^\ell| |\partial_{x_k}v_c^\ell| \ud x \\
&\leqslant \frac{1}{2}\int_{\bbfR^3} \frac{\tilde{K}(c)}{|x|^2}\Big(\sum_{k,\ell =1}^3|w_m^k|^2+ |g_j^\ell |^2 \Big) \ud x .
\end{split}
\end{equation} 
Each term on the right-hand side of \eqref{vc:est:1} is finite due to the Hardy inequality \rf{hardy}. 
Next, using the explicit formulas \rf{L sol} we immediately obtain $ |\cdot |v_c \in \left(L^\infty(\bbfR^3)\right)^3, $ hence  the Schwarz inequality implies
\begin{equation} \label{vc:est:2}
    \big((v_c \cdot \nabla )w_m(t),g_j\big) \leqslant \big\| |\cdot |v_c \big\|_{\infty } \big\| |\cdot |^{-1} g_j \big\|_2 \|\nabla w_m \|_2.
\end{equation}
The right-hand side of this inequality is finite because 
 $ |\cdot |^{-1} g_j \in L^2(\bbfR^3)$ by the Hardy inequality  \rf{hardy}, again.

Now, we obtain \textit{a priori} estimate of the sequence $\{w_m\}_{m=1}^{\infty }$ by multiplying \rf{w m} by $d_{jm}$ and adding the resulting equations for $j=1,2,..., m$. Taking into account that ${\rm div }\ w_m =0$, we get 
\begin{equation*}
   \frac{1}{2}\frac{\ud}{\ud t}\|w_m(t)\|^2_2+\| \nabla \otimes w_m(t)\|_2^2+\big((w_m(t)\cdot  \nabla )v_c,w_m(t)\big)=0 .
\end{equation*}   
Consequently, using inequality \rf{eq:H} and integrating from $s$ to $t$, we obtain the estimate 
\begin{equation*}
    \|w_m(t)\|^2_2 +2 \big(1-K(c)\big) \int_s^t\| \nabla w_m(s)\|_2^2 \ud s\leqslant \|w (s)\|_2^2 .
\end{equation*}
Now, repeating the classical reasoning from {\it e.g.} \cite[Ch. III. Thm. 3.1]{T}, we obtain the existence of a weak solution in the energy space $X_T$ defined in \rf{XT}, which satisfies strong energy inequality \rf{en:inq}.
\end{proof}

%%%%%%%%%%%%%%%%%%%%%%%%%%%%%%%%%%%%%%%%%%%%%%%%%%%%%%%%%%%%%%%%%%%%%%%%%%%%%%%%%%%%%%%%%%%%%%%%%%%%%%%%%%%%%%%%%%%%%%%%%%%%%%%%%%%%%%%%%%%%%%%%%%%%%%%%
\section{Linearized equation}\label{lin:eq}
\setcounter{equation}{0}

In the proof of the $L^2$-decay of weak solutions to problem \rf{w:eq}--\rf{w:ini}, we use properties of solutions to the linearized Cauchy problem 
\begin{align}
  \label{z:eq} z_t-\Delta z +(z \cdot \nabla )v_c +(v_c \cdot \nabla )z + \nabla \pi &=0, \quad (x,t)\in \bbfR^3 \times (0,\infty ) ,\\
  \label{z:d} \textrm{div}\ z&=0,\\
  \label{z:ini} z(x,0)&=z_0(x), \quad x\in \bbfR^3.
\end{align} 
Let us first recall that the Leray projector on divergence-free vector fields is defined by the formula $\mathbb{P} v =v - \nabla \Delta ^{-1}(\nabla \cdot v)$
for sufficiently smooth vectors $v=(v_1(x),v_2(x),v_3(x))$. To give a meaning to $\mathbb{P}$, it suffices to use the Riesz transforms $R_j$ which are the pseudo-differential operators defined in the Fourier variables as $\widehat {R_kf}(\xi )=\frac{i\xi }{|\xi|}\widehat{f}(\xi)$. Here, the Fourier transform of an integrable function $v$ is given by $\widehat{v}(\xi )=(2\pi )^{-\nicefrac{3}{2}}\int_{\bbfR^3}\se^{-ix\cdot \xi}v(x) \ud x$. Applying these well-known operators we define $(\mathbb{P}v)_j=v_j+\sum_{k=1}^3 R_jR_kv_k$.

Using the Leray projector $\mathbb{P}$, we can formally transform system \rf{z:eq}--\rf{z:d} into
\begin{align*}
   z_t-\Delta z+\mathbb{P}\Big((z \cdot \nabla )v_c\Big) +\mathbb{P}\Big((v_c \cdot \nabla )z \Big)&=0.
\end{align*}
Now, for simplicity, let us denote the linear operator
\begin{equation}\label{L:eq}
   \mathcal{L}z=-\Delta z+\mathbb{P}\Big((z \cdot \nabla )v_c\Big) +\mathbb{P}\Big((v_c \cdot \nabla )z\Big)
\end{equation}
and its adjoint operator in $L^2_\sigma (\bbfR^3)$ given by the formula
\begin{equation}\label{L*:eq}
  \mathcal{L^*}z=-\Delta z +(\nabla v_c)^T  z-(v_c \cdot \nabla )z.
\end{equation}
In the following, we study these operators via the
corresponding sesquilinear forms which defined for all $z,v\in H^1_\sigma (\bbfR^3)$ as follows
\begin{equation}\label{bilin:L}
  a_{\mathcal{L}}(z,v)=\int_{\bbfR^3}\nabla z\cdot \nabla v \ud x +\int_{\bbfR^3} (z\cdot \nabla )v_c\cdot v \ud x +\int_{\bbfR^3}(v_c\cdot \nabla)z\cdot v \ud x
\end{equation}
and
\begin{equation}\label{bilin:L*}
  a_{\mathcal{L^*}}(z,v)=\int_{\bbfR^3}\nabla z\cdot \nabla v \ud x +\int_{\bbfR^3} (\nabla v_c)^T z \cdot v \ud x -\int_{\bbfR^3}(v_c \cdot \nabla )z\cdot v \ud x.
\end{equation}

Our goal is to show that both operators $-\mathcal{L}$ and $-\mathcal{L^\ast }$ 
(in fact, their closures in $L^2_\sigma(\bbfR^3)$)
are infinitesimal generators of analytic semigroups of linear operators on $L^2_\sigma(\bbfR^3)$,
provided condition \rf{c0} is satisfied. Here, we use the following abstract criterion.

\begin{proposition}\label{DH}
Let $\mathcal{H}$ be a Hilbert space and let $\mathcal{V}\subset \mathcal{H}$ be a dense subspace. Assume that $\mathcal{V}$ is a Hilbert space with the inner product $(\cdot , \cdot )_{\mathcal{V}}$ and with the norm $\| \cdot \|_{\mathcal{V}}$ such that for a constant $C>0$ we have $ \|x\|_{\mathcal{H}}\leqslant C\|x\|_{\mathcal{V}}$ for all $x\in \mathcal{V}$. Let $a(x,y)$ be a bounded sesquilinear form on $\mathcal{V}$, which defines an operator $A: \mathcal{D}(A)\rightarrow \mathcal{H}$ as follows
\begin{equation*}
   \mathcal{D}(A)=\{ z\in \mathcal{V} :|a(z,v)|\leqslant C \| v\|_{\mathcal{H}}, v\in \mathcal{V}\}, \quad (Az,v)_{\mathcal{H}}=a(z,v).
\end{equation*}
Suppose that for some $\alpha >0$ and $\lambda_0 \in \bbfR$ we have 
\begin{equation}\label{a:ineq}
  \alpha \| z\|_{\mathcal{V}}^2\leqslant \text{\rm Re} \ a(z,z)+\lambda_0\|z\|_{\mathcal{H}}^2.
\end{equation}
Then $-A$ is the infinitesimal generator of a strongly continuous semigroup of linear operators on $\mathcal{H}$ which is holomorphic in a sector $S_\ve=\{s\in \bbfC: |\text{\rm Arg}\ s|<\ve\}$ for some $\ve >0$.
\end{proposition}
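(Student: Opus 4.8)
The plan is to recognize Proposition~\ref{DH} as the classical generation theorem for the semigroup attached to a bounded, coercive (not necessarily symmetric) sesquilinear form, and to prove it by the standard combination of the Lax--Milgram lemma with the characterization of generators of analytic semigroups in terms of a sectorial resolvent set and an $O(1/|\mu|)$ resolvent bound. First I would remove the parameter $\lambda_0$: since $-A$ generates a holomorphic semigroup if and only if $-(A+\lambda_0 I)$ does, and since $A+\lambda_0 I$ is the operator associated (in the sense of the Proposition) with the form $a(x,y)+\lambda_0(x,y)_{\mathcal H}$, we may assume $\lambda_0=0$, so that \rf{a:ineq} reads $\mathrm{Re}\,a(z,z)\ge\alpha\|z\|_{\mathcal V}^2$ for all $z\in\mathcal V$. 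Denoting by $M$ the bound $|a(x,y)|\le M\|x\|_{\mathcal V}\|y\|_{\mathcal V}$, the first observation is that the numerical range of $a$ is sectorial: for $z\in\mathcal V\setminus\{0\}$ one has $\mathrm{Re}\,a(z,z)\ge\alpha\|z\|_{\mathcal V}^2>0$ and $|\mathrm{Im}\,a(z,z)|\le M\|z\|_{\mathcal V}^2\le(M/\alpha)\,\mathrm{Re}\,a(z,z)$, hence $a(z,z)$ lies in the closed sector $\{w\in\bbfC:w=0\ \text{or}\ |\mathrm{Arg}\,w|\le\theta\}$ with $\theta=\arctan(M/\alpha)<\tfrac{\pi}{2}$.

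Next I would fix a small $\eta>0$ with $\theta+\eta<\tfrac{\pi}{2}$ and, for each $\mu$ in the sector $\Sigma=\{\mu\in\bbfC:|\mathrm{Arg}\,\mu|<\tfrac{\pi}{2}+\eta\}$ and each $f\in\mathcal H$, consider the variational problem of finding $z=z_\mu\in\mathcal V$ with $a(z,v)+\mu(z,v)_{\mathcal H}=(f,v)_{\mathcal H}$ for all $v\in\mathcal V$ (the right-hand side is a bounded functional on $\mathcal V$ because $\|\cdot\|_{\mathcal H}\le C\|\cdot\|_{\mathcal V}$). The crucial elementary point is that, because $\theta<\tfrac{\pi}{2}$, one can choose a rotation angle $\phi=\phi(\mu)$ with $|\phi|<\tfrac{\pi}{2}-\theta$ such that both $\cos(\theta+|\phi|)$ and $\cos(\mathrm{Arg}\,\mu+\phi)$ are bounded below by a constant $\delta>0$ that does not depend on $\mu\in\Sigma$; consequently $\mathrm{Re}\big(\se^{i\phi}(a(z,z)+\mu\|z\|_{\mathcal H}^2)\big)\ge\delta\alpha\|z\|_{\mathcal V}^2$ for all $z\in\mathcal V$, so the form $a(\cdot,\cdot)+\mu(\cdot,\cdot)_{\mathcal H}$ (which, for fixed $\mu$, is also bounded on $\mathcal V$) is coercive. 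The Lax--Milgram lemma then produces a unique $z_\mu\in\mathcal V$; from the definition of $\mathcal D(A)$ one reads off $z_\mu\in\mathcal D(A)$ and $(\mu I+A)z_\mu=f$, while uniqueness gives injectivity of $\mu I+A$, so $\Sigma\subset\rho(-A)$ with $(\mu I+A)^{-1}f=z_\mu$. For the resolvent bound I would test this identity with $v=z_\mu$, obtaining $\mu\|z_\mu\|_{\mathcal H}^2+a(z_\mu,z_\mu)=(f,z_\mu)_{\mathcal H}$, multiply by $\se^{i\phi}$ and take real parts; since $\mathrm{Re}\big(\se^{i\phi}a(z_\mu,z_\mu)\big)\ge0$ and $\mathrm{Re}\big(\mu\se^{i\phi}\big)=|\mu|\cos(\mathrm{Arg}\,\mu+\phi)\ge\delta|\mu|$, this yields $\delta|\mu|\,\|z_\mu\|_{\mathcal H}^2\le\|f\|_{\mathcal H}\|z_\mu\|_{\mathcal H}$, i.e. $\|(\mu I+A)^{-1}\|_{\mathcal H\to\mathcal H}\le C/|\mu|$ uniformly on $\Sigma$.

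Finally, the uniformly bounded resolvent forces $A$ to be closed, and its domain is dense in $\mathcal H$: applying the same construction to the (bounded, coercive) adjoint form $a^{\ast}(z,v)=\overline{a(v,z)}$ produces an operator $A^{\ast}$ whose resolvent $(\mu I+A^{\ast})^{-1}$ is the Hilbert-space adjoint of $(\mu I+A)^{-1}$, so any $u\in\mathcal H$ orthogonal to $\mathcal D(A)=\mathrm{Ran}\,(\mu I+A)^{-1}$ satisfies $(\mu I+A^{\ast})^{-1}u=0$, hence $u=0$. With $A$ densely defined and closed, $\rho(-A)\supset\Sigma$, and $\|(\mu I+A)^{-1}\|\le C/|\mu|$ on $\Sigma$, the classical abstract criterion shows that $-A$ is the infinitesimal generator of a strongly continuous semigroup holomorphic in the sector $S_\eta$; undoing the shift by $\lambda_0$ multiplies the semigroup by $\se^{-\lambda_0 t}$ and leaves the conclusion unchanged. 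The one genuinely delicate step is the uniform-in-$\mu$ choice of the rotation $\phi(\mu)$ making the perturbed form coercive (and the resolvent identity estimable) with a $\mu$-independent constant on a sector of half-angle strictly larger than $\tfrac{\pi}{2}$ — this is exactly where the sectoriality $\theta<\tfrac{\pi}{2}$ of the numerical range is used; the Lax--Milgram step, the energy test, the density argument, and the quoted abstract generation theorem are then routine.
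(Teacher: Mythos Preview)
Your argument is correct and is precisely the standard proof of this generation theorem for sectorial forms: reduce to $\lambda_0=0$, use boundedness and coercivity to trap the numerical range of $a$ in a sector of half-angle $\theta<\tfrac\pi2$, then for each $\mu$ in a slightly larger sector rotate by a suitable $\phi(\mu)$ so that Lax--Milgram applies to $a(\cdot,\cdot)+\mu(\cdot,\cdot)_{\mathcal H}$ and the energy test yields $\|(\mu I+A)^{-1}\|\le C/|\mu|$; density and closedness follow from the adjoint-form construction, and the abstract sectorial-generator criterion finishes.

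The paper, however, does not prove Proposition~\ref{DH} at all: it simply records that the result is due to Lions, that its proof combines theorems from \cite{Lions} and \cite{P}, and refers to \cite[Prop.~1.1]{DH} or \cite[Prop.~1.51]{Ou} for a detailed argument. What you have written is exactly the argument one finds in those references, so there is no genuine methodological difference---you have filled in what the authors chose to omit. The one place where your write-up could be tightened is the uniform choice of $\phi(\mu)$ and $\delta$: it helps to make explicit that one may take, say, $\phi=0$ when $|\mathrm{Arg}\,\mu|\le\tfrac\pi2-\eta$ and $\phi=\mp 2\eta$ when $\mathrm{Arg}\,\mu$ is near $\pm\tfrac\pi2$, with $\eta$ chosen so small that $\theta+2\eta<\tfrac\pi2$; this makes the claimed uniform lower bounds on $\cos(\theta+|\phi|)$ and $\cos(\mathrm{Arg}\,\mu+\phi)$ transparent.
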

The result stated in Proposition \ref{DH} is essentially due to Lions \cite{Lions}. Its proof is a combination of theorems from \cite{Lions} and \cite{P} and we do not include it here, because this is more or less standard reasoning. A detailed proof can be found {\it e.g.}  either in \cite[Prop. 1.1]{DH} or in  \cite[Prop. 1.51]{Ou}.
 
Now, we apply Proposition \ref{DH} to study operator $\mathcal{L}$ and $\mathcal{L^*}$. 
\begin{theorem}\label{th:L}
Assume that $|c|\geqslant c_0$, where $c_0$ is defined in \rf{c0}. Then 
 the operators $\mathcal{-L}$ and $\mathcal{-L^*}$ defined in \rf{L:eq} and \rf{L*:eq} are infinitesimal generators of strongly continuous semigroups of linear operators on $L^2_\sigma (\bbfR^3)$ which are holomorphic in a sector  $\{s\in \bbfC:|\text{\rm Arg }\ s|<\ve\}$ for a certain $\ve=\ve(c) >0$. 
\end{theorem}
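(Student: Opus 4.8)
The plan is to apply Proposition~\ref{DH} to each of the two forms, taking $\mathcal{H}=L^2_\sigma(\bbfR^3)$ and $\mathcal{V}=H^1_\sigma(\bbfR^3)$ with its usual norm. Since $\mathcal{V}$ is dense in $\mathcal{H}$ and $\|z\|_2\leqslant\|z\|_{H^1}$, it only remains to check that $a_{\mathcal{L}}$ and $a_{\mathcal{L^*}}$ from \rf{bilin:L}--\rf{bilin:L*} are bounded sesquilinear forms on $\mathcal{V}$ and that each satisfies the coercivity inequality \rf{a:ineq}. Proposition~\ref{DH} then produces, for each form, an operator $A$ with $-A$ the generator of a semigroup on $L^2_\sigma(\bbfR^3)$ holomorphic in a sector; a routine integration by parts, using that the Leray projector is self-adjoint and acts as the identity on $L^2_\sigma(\bbfR^3)$, identifies $A$ with the closure in $L^2_\sigma(\bbfR^3)$ of $\mathcal{L}$ from \rf{L:eq} (resp.\ of $\mathcal{L^*}$ from \rf{L*:eq}).

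To see that $a_{\mathcal{L}}$ is bounded on $\mathcal{V}$, I would estimate the three terms in \rf{bilin:L} separately. The Dirichlet term is bounded by $\|\nabla\otimes z\|_2\,\|\nabla\otimes v\|_2$. For $\int_{\bbfR^3}(z\cdot\nabla)v_c\cdot v\ud x$ I would invoke the pointwise bounds $|\partial_{x_j}v_c^k(x)|\leqslant\tilde K(c)|x|^{-2}$ of Lemma~\ref{l:K jk}, with $\tilde K(c)=\max_{j,k}K_{j,k}(c)$, and then the Cauchy--Schwarz inequality together with the classical Hardy inequality \rf{hardy}, exactly as in the proof of Theorem~\ref{l:H}; this gives a bound $C\tilde K(c)\,\|\nabla\otimes z\|_2\,\|\nabla\otimes v\|_2$. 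For $\int_{\bbfR^3}(v_c\cdot\nabla)z\cdot v\ud x$ I would use that $|\cdot |v_c\in\big(L^\infty(\bbfR^3)\big)^3$ — immediate from \rf{L sol}, since $c|x|-x_1$ never vanishes for $x\neq 0$ when $|c|>1$ — and the Hardy inequality once more, just as in \rf{vc:est:2}, to get a bound $C\big\| |\cdot |v_c \big\|_{\infty}\,\|\nabla\otimes z\|_2\,\|\nabla\otimes v\|_2$. Since the entries of $(\nabla v_c)^T$ obey the same pointwise bounds, the identical estimates show that $a_{\mathcal{L^*}}$ is bounded as well.

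For the coercivity inequality I would set $v=z$ and bound $\mathrm{Re}\,a_{\mathcal{L}}(z,z)$ from below. The drift term drops out: $\mathrm{Re}\int_{\bbfR^3}(v_c\cdot\nabla)z\cdot\bar z\ud x=\tfrac12\int_{\bbfR^3}v_c\cdot\nabla|z|^2\ud x=\tfrac12\int_{\bbfR^3}\mathrm{div}\big(|z|^2 v_c\big)\ud x=0$, where I use that $\mathrm{div}\,v_c=0$ holds in the sense of distributions on \emph{all} of $\bbfR^3$ — recalled in the Introduction via $\int v_c\cdot\nabla\varphi\,\ud x=0$ — and that $|z|^2 v_c$ and $v_c\cdot\nabla|z|^2$ both lie in $L^1(\bbfR^3)$ thanks to $|v_c(x)|\leqslant C|x|^{-1}$, the Hardy inequality \rf{hardy}, and $z\in L^2(\bbfR^3)$. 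The remaining lower-order term is controlled by the Hardy-type inequality of Theorem~\ref{l:H}, whose proof applies verbatim to complex-valued fields and gives $\big|\int_{\bbfR^3}(z\cdot\nabla)v_c\cdot\bar z\ud x\big|\leqslant K(c)\,\|\nabla\otimes z\|_2^2$. Hence $\mathrm{Re}\,a_{\mathcal{L}}(z,z)\geqslant\big(1-K(c)\big)\|\nabla\otimes z\|_2^2$, and because $|c|\geqslant c_0$ forces $K(c)<1$ by \rf{c0}, adding $\big(1-K(c)\big)\|z\|_2^2$ to both sides gives \rf{a:ineq} with $\alpha=\lambda_0=1-K(c)$. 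For $\mathcal{L^*}$ the identity $\mathrm{Re}\,a_{\mathcal{L^*}}(z,z)=\mathrm{Re}\,a_{\mathcal{L}}(z,z)$ holds (the antisymmetric drift contributes zero in either sign, and the two matrix terms $\int((z\cdot\nabla)v_c)\cdot\bar z\ud x$ and $\int((\nabla v_c)^T z)\cdot\bar z\ud x$ have the same real part on the diagonal $v=z$), so the same inequality follows with the same constants.

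Feeding these two facts into Proposition~\ref{DH} yields the claimed generation of holomorphic semigroups on $L^2_\sigma(\bbfR^3)$ for $-\mathcal{L}$ and $-\mathcal{L^*}$, with a sector opening $\ve=\ve(c)>0$. The step that demands genuine care is the coercivity estimate: it is precisely there that the hypothesis $|c|\geqslant c_0$ is used (through $K(c)<1$), and there that one must handle the non-integrable singularity of $v_c$ at the origin in order to justify the vanishing of the drift term's quadratic form; the boundedness of the forms and the identification of the form operator with the closure of $\mathcal{L}$ are routine.
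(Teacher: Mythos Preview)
Your proof is correct and follows essentially the same route as the paper: apply Proposition~\ref{DH} with $\mathcal{H}=L^2_\sigma(\bbfR^3)$ and $\mathcal{V}=H^1_\sigma(\bbfR^3)$, check boundedness of the forms via the pointwise bounds of Lemma~\ref{l:K jk} and the Hardy inequality (exactly estimates \rf{vc:est:1}--\rf{vc:est:2}), and obtain coercivity from the vanishing of the drift quadratic form together with the Hardy-type inequality \rf{eq:H}. The only minor variation is your treatment of $\mathcal{L^*}$: you note that $\mathrm{Re}\,a_{\mathcal{L^*}}(z,z)=\mathrm{Re}\,a_{\mathcal{L}}(z,z)$, whereas the paper re-estimates $\int_{\bbfR^3}(\nabla v_c)^T z\cdot z\ud x$ directly from Lemma~\ref{l:K jk}; both yield the same lower bound $(1-K(c))\|\nabla\otimes z\|_2^2$.
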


\begin{proof}
We apply  Proposition \ref{DH} with  $\mathcal{H}=L^2_\sigma (\bbfR^3)$ and $\mathcal{V}=H^1_\sigma (\bbfR^3)$.
To show that the  sesquilinear forms $a_{\mathcal{L}}$ and $a_{\mathcal{L}^*}$ are bounded on $\mathcal{V}$, it suffices to follow estimates from
\eqref{vc:est:1} and \eqref{vc:est:2}.

Condition \eqref{a:ineq} for  the sesquilinear form $a_{\mathcal{L}}$  defined in \rf{bilin:L}  results immediately 
 the following inequality 
\begin{equation}\label{L:inq}
   \alpha \| \nabla \otimes z\|_2^2 \leqslant  a_{\mathcal{L}}(z,z)
\end{equation}
for a certain $\alpha >0$ and all $z\in H^1_\sigma (\bbfR^3)$.  Here, we would like to recall  
that $\int_{\bbfR^3} (v_c \cdot \nabla)z \cdot z \ud x=0$ for ${\rm div}\  v_c=0$. Hence, estimate \rf{L:inq} is 
a consequence of Hardy--type ine\-qua\-lity~\rf{eq:H}:
\begin{align}\label{L:inq2}
   a_{\mathcal{L}}(z,z)&= \| \nabla \otimes z\|_2^2 +\int_{\bbfR^3} (z\cdot \nabla )v_c \cdot z \ud x \geqslant \big( 1-K(c)\big)\| \nabla \otimes z\|_2^2,
\end{align}
where  $K(c)<1$ for $|c|\geqslant c_0>1$ by \rf{c0}. 
Using Proposition \ref{DH} we complete the proof that the operator $\mathcal{-L}$ generates a holomorphic semigroup of linear operators on $L^2_\sigma (\bbfR^3)$. 

An analogous argument applies to the adjoint operator $\mathcal{-L^*}$, where by   Lemma \ref{l:K jk}, we get
\begin{align*}
   a_{\mathcal{L^*}}(z,z)&= \| \nabla \otimes z\|_2^2 +\int_{\bbfR^3} (\nabla v_c)^T z \cdot z \ud x \\
   &\geqslant \| \nabla \otimes z\|_2^2 -\int_{\bbfR^3}\sum_{j,k=1}^3|\partial_{x_j}v_c^k| |z_j| |z_k| \geqslant  \big( 1-K(c)\big)\| \nabla \otimes z\|_2^2 .
\end{align*}
Applying Proposition \ref{DH}, we complete the proof of Theorem \ref{th:L}.
\end{proof} 

The following corollaries describe typical properties of generators of analytic semigroups.
We state them for the operator $\mathcal{L}$, however, they are obviously valid for the adjoint operator $\mathcal{L}^*$, as well.

\begin{corollary}\label{l:sqrt L}
Under the assumptions of Theorem \ref{th:L}, the following inequality 
\begin{equation}\label{sqrt L}
   \| \nabla \otimes z\|_2\leqslant \big(1-K(c)\big)\|\mathcal{L}^{\nicefrac{1}{2} }z\|_2 
\end{equation}
holds true for all $z\in \dot H^1_\sigma (\bbfR^3)$.
\end{corollary}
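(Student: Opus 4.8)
The statement to prove is Corollary~\ref{l:sqrt L}, namely that $\|\nabla\otimes z\|_2\leqslant(1-K(c))\|\mathcal{L}^{1/2}z\|_2$ for $z\in\dot H^1_\sigma$. Wait — I should double-check the direction of the inequality. From \rf{L:inq2} we have $a_{\mathcal L}(z,z)\geqslant(1-K(c))\|\nabla\otimes z\|_2^2$, and $a_{\mathcal L}(z,z)=(\mathcal Lz,z)=\|\mathcal L^{1/2}z\|_2^2$. So $\|\mathcal L^{1/2}z\|_2^2\geqslant(1-K(c))\|\nabla\otimes z\|_2^2$, i.e. $\|\nabla\otimes z\|_2\leqslant(1-K(c))^{-1/2}\|\mathcal L^{1/2}z\|_2$. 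The corollary as typeset has $(1-K(c))$ rather than $(1-K(c))^{-1/2}$, which I suspect is a typo in the paper; regardless, my plan is to prove the correct quantitative bound that follows from the coercivity estimate, and I will note the constant issue.

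\smallskip

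\textbf{Plan.} The proof rests on the quadratic-form identity connecting $\mathcal L^{1/2}$ with the sesquilinear form $a_{\mathcal L}$. First I would record that, since $-\mathcal L$ generates an analytic semigroup on $\mathcal H=L^2_\sigma$ (Theorem~\ref{th:L}) and satisfies the coercivity \rf{L:inq}, the square root $\mathcal L^{1/2}$ is a well-defined closed operator whose form domain is exactly $\mathcal D(\mathcal L^{1/2})$, and on $\mathcal D(\mathcal L)$ one has $(\mathcal L z,z)=\|\mathcal L^{1/2}z\|_2^2$. Combining this with $(\mathcal L z,z)=a_{\mathcal L}(z,z)$ and the chain of inequalities \rf{L:inq2} gives, for $z\in\mathcal D(\mathcal L)$,
\begin{equation*}
   (1-K(c))\,\|\nabla\otimes z\|_2^2 \leqslant a_{\mathcal L}(z,z)=\|\mathcal L^{1/2}z\|_2^2 .
\end{equation*}
This already yields the estimate on the core $\mathcal D(\mathcal L)$, up to taking a square root.

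\smallskip

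\textbf{Extension to all of $\dot H^1_\sigma$.} The second step is a density/closure argument: $\mathcal D(\mathcal L)$ is a form core for $\mathcal L$, so for any $z\in\mathcal D(\mathcal L^{1/2})=H^1_\sigma$ there is a sequence $z_n\in\mathcal D(\mathcal L)$ with $z_n\to z$ in $\mathcal H$ and $\mathcal L^{1/2}z_n\to\mathcal L^{1/2}z$ in $\mathcal H$; the coercivity \rf{L:inq} forces $\nabla\otimes z_n$ to be Cauchy in $L^2$, hence $z_n\to z$ in $H^1_\sigma$ and the inequality passes to the limit. To reach the homogeneous space $\dot H^1_\sigma$ — which is larger, since it drops the $L^2$ requirement on $z$ itself — I would use that $C^\infty_{c,\sigma}(\bbfR^3)$ is dense in $\dot H^1_\sigma$ and that, by the Hardy inequality \rf{hardy} together with Lemma~\ref{l:K jk}, the form $a_{\mathcal L}$ and the right-hand side $\|\mathcal L^{1/2}\cdot\|_2$ extend continuously with respect to the $\dot H^1$ seminorm (note $a_{\mathcal L}(z,z)$ is controlled purely by $\|\nabla\otimes z\|_2^2$ from above by \rf{vc:est:1}–\rf{vc:est:2}, and from below by \rf{L:inq2}), so both sides are continuous functionals of $\nabla\otimes z$ alone and the inequality, valid on the dense subspace, persists on $\dot H^1_\sigma$.

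\smallskip

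\textbf{Main obstacle.} The delicate point is the passage from the inhomogeneous Hilbert space $\mathcal H=L^2_\sigma$, where the spectral/form calculus for $\mathcal L^{1/2}$ genuinely lives, to the homogeneous space $\dot H^1_\sigma$ in the statement; one must make sense of $\mathcal L^{1/2}z$ for $z$ that need not be in $L^2$. The clean way around this is to observe that everything in sight depends on $z$ only through $\nabla\otimes z\in L^2$: by \rf{L:inq2} and the upper bound from \rf{vc:est:1}–\rf{vc:est:2} one has $c_1\|\nabla\otimes z\|_2^2\leqslant\|\mathcal L^{1/2}z\|_2^2\leqslant c_2\|\nabla\otimes z\|_2^2$ on the dense class of Schwartz divergence-free fields, so $z\mapsto\mathcal L^{1/2}z$ extends to a bounded map from $\dot H^1_\sigma$ (with the seminorm) into $L^2$, and the desired bound is just the lower estimate of this extended map. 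A secondary, purely cosmetic issue is the exponent on $(1-K(c))$: the argument above produces the factor $(1-K(c))^{-1/2}$, so I would state the corollary in that sharp form (and it is in any case $\geqslant 1$, consistent with $\|\nabla\otimes z\|_2\leqslant\|\mathcal L^{1/2}z\|_2$ being false in general and the correct direction being the one derived here).
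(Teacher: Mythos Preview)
Your approach is exactly the paper's: invoke the quadratic-form identity $\|\mathcal{L}^{1/2}z\|_2^2=a_{\mathcal{L}}(z,z)$ and then apply the coercivity estimate \rf{L:inq2}; the paper's proof consists of precisely those two sentences and nothing more. You are right that the constant should be $(1-K(c))^{-1/2}$ rather than $(1-K(c))$---this is a typo in the paper---and your additional density/closure discussion to pass from $\mathcal D(\mathcal L)$ to $\dot H^1_\sigma$ is a point the paper simply glosses over.
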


\begin{proof}
By the definition of a square root of nonnegative operators, we have $\| \mathcal{L}^{\nicefrac{1}{2} }z\|_2^2=a_{\mathcal{L}}(z,z)$. Hence to complete this proof, it suffices to recall inequality \rf{L:inq2}.
\end{proof}

\begin{corollary}\label{c:w0 norm}
Under the assumptions of Theorem \ref{th:L},
\begin{equation}\label{w0 norm}
   \| \se^{-t\mathcal{L}}z_0\|_2\leqslant \|z_0\|_2
\end{equation}
for all $z_0\in L^2_\sigma (\bbfR^3)$ and $t>0$.
\end{corollary}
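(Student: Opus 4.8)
The plan is to obtain \rf{w0 norm} from a standard energy estimate for the semigroup, using the coercivity inequality \rf{L:inq2} established in the proof of Theorem \ref{th:L}. First I would take $z_0\in \mathcal{D}(\mathcal{L})$ and set $z(t)=\se^{-t\mathcal{L}}z_0$. Since $-\mathcal{L}$ generates a strongly continuous (indeed holomorphic) semigroup on $L^2_\sigma(\bbfR^3)$ by Theorem \ref{th:L}, the curve $t\mapsto z(t)$ belongs to $C^1\big([0,\infty),L^2_\sigma(\bbfR^3)\big)$, satisfies $z(t)\in \mathcal{D}(\mathcal{L})\subset H^1_\sigma(\bbfR^3)$ for every $t\geqslant 0$, and solves $z'(t)+\mathcal{L}z(t)=0$ in $L^2_\sigma(\bbfR^3)$.

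Next I would compute, for such $z_0$,
\[
  \tfrac12\frac{\ud}{\ud t}\|z(t)\|_2^2=\big(z'(t),z(t)\big)=-\big(\mathcal{L}z(t),z(t)\big)=-a_{\mathcal{L}}\big(z(t),z(t)\big),
\]
where the last equality is exactly the defining relation $(\mathcal{L}z,v)=a_{\mathcal{L}}(z,v)$ of the operator associated with the form (see Proposition \ref{DH}), and all quantities are real since $z$ is real-valued. Invoking \rf{L:inq2}, namely $a_{\mathcal{L}}(z,z)\geqslant\big(1-K(c)\big)\|\nabla\otimes z\|_2^2\geqslant 0$ valid for $|c|\geqslant c_0$, we get $\frac{\ud}{\ud t}\|z(t)\|_2^2\leqslant 0$. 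Integrating from $0$ to $t$ yields $\|\se^{-t\mathcal{L}}z_0\|_2\leqslant\|z_0\|_2$ for every $z_0\in\mathcal{D}(\mathcal{L})$ and every $t>0$. The same computation with $a_{\mathcal{L}^*}$ in place of $a_{\mathcal{L}}$ gives the analogous bound for $\se^{-t\mathcal{L}^*}$.

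Finally I would remove the restriction $z_0\in\mathcal{D}(\mathcal{L})$: the domain $\mathcal{D}(\mathcal{L})$ is dense in $L^2_\sigma(\bbfR^3)$, because a generator of a $C_0$-semigroup has dense domain, and for each fixed $t>0$ the operator $\se^{-t\mathcal{L}}$ is bounded on $L^2_\sigma(\bbfR^3)$; hence the inequality $\|\se^{-t\mathcal{L}}z_0\|_2\leqslant\|z_0\|_2$ passes from $\mathcal{D}(\mathcal{L})$ to all of $L^2_\sigma(\bbfR^3)$ by continuity, which is \rf{w0 norm}.

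I do not expect a genuine obstacle: the statement amounts to the dissipativity of $-\mathcal{L}$, i.e. $\operatorname{Re}(\mathcal{L}z,z)=a_{\mathcal{L}}(z,z)\geqslant 0$, combined with the already proven fact that $-\mathcal{L}$ generates a $C_0$-semigroup, so one could alternatively just quote the Lumer--Phillips theorem. The only points requiring minor care are that $z(t)$ stays in $H^1_\sigma(\bbfR^3)$ so that $a_{\mathcal{L}}(z(t),z(t))$ is well defined — automatic from $\mathcal{D}(\mathcal{L})\subset\mathcal{V}=H^1_\sigma(\bbfR^3)$ — and the differentiation of $t\mapsto\|z(t)\|_2^2$, which is routine for a $C^1$ Hilbert-space-valued curve.
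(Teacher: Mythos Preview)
Your proposal is correct and follows essentially the same approach as the paper: differentiate $\|z(t)\|_2^2$, identify the derivative with $-a_{\mathcal{L}}(z,z)$ (the paper does this by multiplying \rf{z:eq} by $z$ and integrating, using ${\rm div}\,v_c=0$), and then apply the Hardy-type inequality \rf{eq:H}/\rf{L:inq2} to conclude monotonicity. Your version is in fact slightly more careful than the paper's, since you work first with $z_0\in\mathcal{D}(\mathcal{L})$ and then pass to general $z_0\in L^2_\sigma(\bbfR^3)$ by density, a step the paper leaves implicit.
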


\begin{proof}
Multiplying equation \rf{z:eq} by $z$ and integrating over $\bbfR^3$, we easily obtain energy equality
\begin{equation*}
  \frac{1}{2}\frac{\ud}{\ud t}\|z(t)\|^2_2+\| \nabla \otimes z(t)\|_2^2 +\int_{\bbfR^3}(z\cdot \nabla )v_c\cdot z \ud x=0,
\end{equation*}
because $\int_{\bbfR^3}(v_c \nabla)z\cdot z \ud x=0$ by the condition ${\rm div}\ v_c=0$. Hence, the Hardy-type inequality \rf{eq:H} yields
\begin{equation}\label{l en inq} 
   \frac{1}{2}\frac{\ud}{\ud t}\|z(t)\|^2_2 + \big( 1-K(c) \big)\| \nabla \otimes z(t)\|_2^2\leqslant 0 \ ,
\end{equation}
where $1-K(c)\geqslant 0$ by \rf{c0}. Now, it is sufficient to integrate from $0$ to $t$ to obtain the inequality \rf{w0 norm}.    
\end{proof}

\begin{corollary}\label{sg prop:c}
There exists a constant $C>0$ such that for all $z_0\in L^2_\sigma (\bbfR^3)$ the following inequalities
\begin{equation}\label{sg prop}
   \| \mathcal{L}\se^{-t\mathcal{L}}z_0\|_2\leqslant Ct^{-1}\|z_0\|_2
\end{equation}
and
\begin{equation}\label{sg prop2}
   \| \mathcal{L}^{\nicefrac{1}{2}}\se^{-t\mathcal{L}}z_0\|_2\leqslant Ct^{-\frac{1}{2}}\|z_0\|_2
\end{equation}
hold true for all $t>0$.   
\end{corollary}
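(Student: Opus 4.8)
The plan is to obtain both estimates from standard properties of bounded analytic semigroups, using only Theorem~\ref{th:L} and the contraction bound of Corollary~\ref{c:w0 norm}. By Theorem~\ref{th:L}, $-\mathcal{L}$ generates an analytic semigroup on $\mathcal{H}=L^2_\sigma(\bbfR^3)$, and by Corollary~\ref{c:w0 norm} this semigroup is a contraction; hence it is a \emph{bounded} analytic semigroup, so that $0$ is not an obstruction and the resolvent obeys $\|(\lambda+\mathcal{L})^{-1}\|\leq M/|\lambda|$ on a sector $\{\lambda\in\bbfC:|\text{\rm Arg}\,\lambda|<\frac{\pi}{2}+\ve\}\setminus\{0\}$ for suitable $M,\ve>0$.

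For \eqref{sg prop} I would recall the Dunford--Taylor representation
\[
   \mathcal{L}\,\se^{-t\mathcal{L}}=-\frac{1}{2\pi i}\int_{\Gamma}\lambda\,\se^{\lambda t}(\lambda+\mathcal{L})^{-1}\ud\lambda ,
\]
where $\Gamma$ is the positively oriented boundary of the above sector, deformed near the origin onto the arc $|\lambda|=t^{-1}$. The resolvent bound turns $|\lambda|\,\|(\lambda+\mathcal{L})^{-1}\|$ into the constant $M$, while the rescaling $\lambda\mapsto\lambda/t$ gives $\int_{\Gamma}|\se^{\lambda t}|\,|\ud\lambda|\leq C/t$; combining the two yields $\|\mathcal{L}\,\se^{-t\mathcal{L}}\|\leq C/t$. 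Since this is entirely classical I would merely state it and cite the corresponding result, e.g. \cite[Ch.~2]{P}.

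Finally, \eqref{sg prop2} follows from \eqref{sg prop} and Corollary~\ref{c:w0 norm} with no further semigroup input. For $t>0$ and $z_0\in L^2_\sigma(\bbfR^3)$ the vector $u=\se^{-t\mathcal{L}}z_0$ lies in $\mathcal{D}(\mathcal{L})$ by analyticity, and the square-root identity $\|\mathcal{L}^{\nicefrac{1}{2}}u\|_2^2=a_{\mathcal{L}}(u,u)$ (used already in the proof of Corollary~\ref{l:sqrt L}) together with $a_{\mathcal{L}}(u,u)=(\mathcal{L}u,u)$ (the definition of $\mathcal{L}$ from $a_{\mathcal{L}}$ in Proposition~\ref{DH}) and the Schwarz inequality give
\[
   \|\mathcal{L}^{\nicefrac{1}{2}}\se^{-t\mathcal{L}}z_0\|_2^2
   =\big(\mathcal{L}\se^{-t\mathcal{L}}z_0,\se^{-t\mathcal{L}}z_0\big)
   \leq \|\mathcal{L}\se^{-t\mathcal{L}}z_0\|_2\,\|\se^{-t\mathcal{L}}z_0\|_2
   \leq Ct^{-1}\|z_0\|_2^2 ,
\]
by \eqref{sg prop} and \eqref{w0 norm}; taking square roots gives \eqref{sg prop2}. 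The one point requiring a little care is that $\mathcal{L}$ is not self-adjoint (because of the convective term $\mathbb{P}((v_c\cdot\nabla)z)$), so strictly one should read the identity above with real parts, $\|\mathcal{L}^{\nicefrac{1}{2}}u\|_2^2=\text{\rm Re}\,(\mathcal{L}u,u)$; the antisymmetric part of $a_{\mathcal{L}}$ is $\int_{\bbfR^3}(v_c\cdot\nabla)u\cdot u\ud x$, which is purely imaginary and drops out, so the bound is unchanged. Alternatively one may bypass this altogether by writing $\mathcal{L}^{\nicefrac{1}{2}}\se^{-t\mathcal{L}}=(\mathcal{L}^{\nicefrac{1}{2}}\se^{-t\mathcal{L}/2})\,\se^{-t\mathcal{L}/2}$ and invoking the moment inequality $\|\mathcal{L}^{\nicefrac{1}{2}}\se^{-s\mathcal{L}}\|\leq Cs^{-\nicefrac{1}{2}}$ for analytic semigroups \cite[Ch.~2]{P} together with \eqref{w0 norm}. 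I expect no genuine obstacle here: the content of the corollary is already contained in Theorem~\ref{th:L}, and the only thing to watch is the non-self-adjoint bookkeeping just mentioned.
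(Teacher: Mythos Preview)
Your proposal is correct and follows essentially the same route as the paper: for \eqref{sg prop} the paper simply cites the standard analytic-semigroup estimate \cite[Theorem~5.2]{P} (your Dunford--Taylor sketch is just the content of that citation), and for \eqref{sg prop2} the paper uses exactly your chain $\|\mathcal{L}^{1/2}\se^{-t\mathcal{L}}\psi\|_2^2=|(\mathcal{L}\se^{-t\mathcal{L}}\psi,\se^{-t\mathcal{L}}\psi)|\leq\|\mathcal{L}\se^{-t\mathcal{L}}\psi\|_2\|\se^{-t\mathcal{L}}\psi\|_2\leq Ct^{-1}\|\psi\|_2^2$ via Schwarz, \eqref{sg prop}, and Corollary~\ref{c:w0 norm}. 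Your extra care about the non-self-adjoint bookkeeping is a reasonable refinement of a point the paper treats informally.
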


\begin{proof}
Inequality \rf{sg prop} is the well-known property of analytic semigroups of linear operators (see {\it e.g. }\cite[Theorem 5.2]{P} for more details). Using properties of a square root of a nonnegative operator, the Schwarz inequality, inequality \rf{sg prop} and Corollary \ref{c:w0 norm}, we obtain 
\begin{equation*}
   \| \mathcal{L}^{\frac{1}{2}} \se^{-t\mathcal{L}}\psi \|_2^2= |(\mathcal{L}\se^{-t\mathcal{L}}\psi,\se^{-t\mathcal{L}}\psi)|\leqslant \|\mathcal{L}\se^{-t\mathcal{L}}\psi\|_2\|\se^{-t\mathcal{L}}\psi \|_2\leqslant Ct^{-1}\| \psi \|_2^2 \ 
\end{equation*}
for all $t>0$.
\end{proof}

\begin{corollary}\label{c:lim L2} 
Under the assumptions of Theorem \ref{th:L}, for all $z_0 \in L^2_\sigma (\bbfR^3 )$ 
\begin{equation}\label{eq:lim L2}
   \lim_{t\rightarrow \infty } \| \se^{-t\mathcal{L}}z_0\|_2 =0.
\end{equation}
\end{corollary}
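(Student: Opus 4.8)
The plan is to prove the limit \rf{eq:lim L2} by combining the decay already obtained for the semigroup generated by $-\mathcal L$ (or rather its closure) with a density argument, exploiting the fact that $-\mathcal L$ generates an analytic, contractive semigroup on $L^2_\sigma(\bbfR^3)$. The overall strategy is standard: for $z_0$ in a suitably regular dense subspace the decay is immediate from analyticity (or from a Rellich-type compactness argument), and for general $z_0$ one passes to the limit using the uniform bound $\|\se^{-t\mathcal L}z_0\|_2\leqslant\|z_0\|_2$ from Corollary \ref{c:w0 norm}.

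First, I would fix $z_0\in L^2_\sigma(\bbfR^3)$ and $\delta>0$. Using the density of $\mathcal D(\mathcal L)$ in $L^2_\sigma(\bbfR^3)$ (which holds since $-\mathcal L$ is the generator of a strongly continuous semigroup), pick $y_0\in\mathcal D(\mathcal L)$ with $\|z_0-y_0\|_2<\delta$. Then by linearity and the contraction estimate \rf{w0 norm},
\begin{equation*}
  \|\se^{-t\mathcal L}z_0\|_2\leqslant \|\se^{-t\mathcal L}(z_0-y_0)\|_2+\|\se^{-t\mathcal L}y_0\|_2\leqslant \delta+\|\se^{-t\mathcal L}y_0\|_2,
\end{equation*}
so it suffices to prove $\|\se^{-t\mathcal L}y_0\|_2\to 0$ for $y_0\in\mathcal D(\mathcal L)$. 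For such $y_0$, I would use the energy inequality \rf{l en inq} applied to $z(t)=\se^{-t\mathcal L}y_0$, which after integration gives $(1-K(c))\int_0^\infty\|\nabla\otimes z(\tau)\|_2^2\,d\tau\leqslant \tfrac12\|y_0\|_2^2<\infty$; hence there is a sequence $t_n\to\infty$ with $\|\nabla\otimes z(t_n)\|_2\to 0$. Since $t\mapsto\|z(t)\|_2^2$ is nonincreasing by \rf{l en inq}, the limit $\ell=\lim_{t\to\infty}\|z(t)\|_2$ exists, and I must show $\ell=0$.

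The key point—and the main obstacle—is to convert "$\|\nabla\otimes z(t_n)\|_2\to 0$" into "$\|z(t_n)\|_2\to 0$." This is not automatic on the unbounded domain $\bbfR^3$ because $\dot H^1_\sigma(\bbfR^3)$ does not embed compactly into $L^2$; a Poincaré-type inequality fails. However, the Hardy inequality \rf{hardy} rescues us: $\|z(t_n)\|_2$ cannot in general be controlled by $\|\nabla\otimes z(t_n)\|_2$, but the relevant quantity to exploit is that $z(t_n)$ is bounded in $\dot H^1$ and tending to zero there, combined with boundedness in $L^2$. Alternatively, and more robustly, I would argue by analyticity: for $t>0$ the operator $\mathcal L^{1/2}\se^{-t\mathcal L}$ is bounded with $\|\mathcal L^{1/2}\se^{-t\mathcal L}y_0\|_2\leqslant Ct^{-1/2}\|y_0\|_2$ by \rf{sg prop2}, and for $y_0\in\mathcal D(\mathcal L)$ one has the sharper $\|\mathcal L^{1/2}\se^{-t\mathcal L}y_0\|_2=\|\se^{-t\mathcal L}\mathcal L^{1/2}y_0\|_2$, which is nonincreasing; so $\|\mathcal L^{1/2}z(t)\|_2$ decreases to some limit $\mu\geqslant 0$. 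From $\int_0^\infty\|\mathcal L^{1/2}z(\tau)\|_2^2\,d\tau\sim\int_0^\infty\|\nabla\otimes z(\tau)\|_2^2\,d\tau<\infty$ (using Corollary \ref{l:sqrt L}) together with monotonicity we get $\mu=0$. Finally, writing $z(t)=\se^{-(t-1)\mathcal L}z(1)$ and using that $z(1)\in\mathcal D(\mathcal L^{1/2})$ with $\|\se^{-s\mathcal L}z(1)\|_2\leqslant Cs^{-1/2}\|\mathcal L^{1/2}$-type bounds is not quite enough; instead I'd note $\|z(t)\|_2^2=-2\int_t^\infty \frac{d}{d\tau}\big(\tfrac12\|z(\tau)\|_2^2\big)\,d\tau$ is finite only if $\ell=0$, since $\int_t^\infty\frac{d}{d\tau}(\tfrac12\|z\|_2^2)\,d\tau=\tfrac12\ell^2-\tfrac12\|z(t)\|_2^2$ and $\frac{d}{d\tau}(\tfrac12\|z\|_2^2)\leqslant-(1-K(c))\|\nabla\otimes z\|_2^2$ is integrable. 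Combining: $\tfrac12\|z(t)\|_2^2-\tfrac12\ell^2\leqslant (1-K(c))\int_t^\infty\|\nabla\otimes z\|_2^2\,d\tau\to 0$, so $\ell=\lim\|z(t)\|_2$ and... this still only gives $\|z(t)\|_2\to\ell$.

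The honest resolution of the obstacle is therefore the following: I would show directly that $\ell=0$ by testing the equation against $z$ over a long time window and using that the dissipation integral converges, while simultaneously invoking the smoothing of the analytic semigroup to upgrade $z(t_n)\rightharpoonup 0$ weakly in $L^2$ along the subsequence (which follows since any weak limit $z_\infty$ must satisfy $\nabla\otimes z_\infty=0$ in $L^2$, forcing $z_\infty=0$ by $z_\infty\in L^2$), and then noting $\|z(t_n)\|_2^2=(\se^{-1\cdot\mathcal L}z(t_n-1),z(t_n))$ with $\se^{-\mathcal L}$ compact? — it is not compact on $\bbfR^3$ either. The cleanest correct argument: by \rf{sg prop2}, $\|\se^{-t\mathcal L}z_0\|_2 = \|\se^{-(t/2)\mathcal L}\se^{-(t/2)\mathcal L}z_0\|_2$ and using $\mathcal L^{-1/2}$ boundedness from $\nabla\otimes(\cdot)$ via Corollary \ref{l:sqrt L} one writes $\|\se^{-t\mathcal L}z_0\|_2\leqslant (1-K(c))\|\mathcal L^{1/2}\se^{-(t/2)\mathcal L}\,\mathcal L^{-1/2}\se^{-(t/2)\mathcal L}z_0\|_2$... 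I expect the cleanest route that the authors take is: fix $\varepsilon$, split $z_0=y_0+r_0$ with $y_0\in\mathcal D(\mathcal L)$, $\|r_0\|_2<\varepsilon$; for $y_0$ use $\|\se^{-t\mathcal L}y_0\|_2\leqslant \|\mathcal L^{-1}\|\,\|\mathcal L\se^{-t\mathcal L}y_0\|_2$ — but $\mathcal L$ need not be invertible — so instead use $\se^{-t\mathcal L}y_0=\se^{-(t-1)\mathcal L}\se^{-\mathcal L}y_0$ and the fact that $\se^{-\mathcal L}y_0\in\mathcal D(\mathcal L)$ with the bound $\|\se^{-t\mathcal L}\se^{-\mathcal L}y_0\|_2$ controlled by $(1-K(c))\|\mathcal L^{1/2}\se^{-(t-1)\mathcal L}(\mathcal L^{1/2}\se^{-\mathcal L}y_0)\|_2\cdot(\dots)$, and iterate \rf{sg prop2} to extract a decay $Ct^{-1/2}$ on the dense set, hence $\|\se^{-t\mathcal L}y_0\|_2\to 0$; then let $\varepsilon\to 0$. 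So in summary: the hard part is genuinely the absence of compactness/Poincaré on $\bbfR^3$, and it is overcome by exploiting the analyticity estimate \rf{sg prop2} together with Corollary \rf{sqrt L} to gain a quantitative decay on a dense subspace, which then propagates to all of $L^2_\sigma(\bbfR^3)$ by density and the contraction property \rf{w0 norm}.
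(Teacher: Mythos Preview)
Your overall framework---density plus the contraction bound \rf{w0 norm}---is exactly right, and it is what the paper does. The gap is in the choice of dense subspace and, consequently, in the step you yourself flag as ``the main obstacle.'' You approximate $z_0$ by $y_0\in\mathcal D(\mathcal L)$, and then try to prove $\|\se^{-t\mathcal L}y_0\|_2\to 0$ from membership in the \emph{domain}. None of the routes you sketch closes: the energy argument only yields $\|\nabla\otimes z(t_n)\|_2\to 0$, and on $\bbfR^3$ there is no Poincar\'e inequality to convert this to $L^2$-decay; weak convergence plus ``$\se^{-\mathcal L}$ compact'' fails because the semigroup is not compact on the whole space; and your final suggestion to ``iterate \rf{sg prop2} to extract $Ct^{-1/2}$'' would require writing $y_0=\mathcal L^{-1/2}(\mathcal L^{1/2}y_0)$, i.e.\ bounded invertibility of $\mathcal L^{1/2}$, which you correctly note is not available. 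In short, knowing $y_0\in\mathcal D(\mathcal L)$ gives you no handle on $\|\se^{-t\mathcal L}y_0\|_2$ itself.

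The missing idea, and the paper's one-line fix, is to use the density of the \emph{range} of $\mathcal L$ rather than its domain. The coercivity $a_{\mathcal L^*}(z,z)\geqslant(1-K(c))\|\nabla\otimes z\|_2^2$ forces $\ker\mathcal L^*=\{0\}$ (a nonzero element of the kernel would have vanishing gradient, hence be a nonzero constant in $L^2(\bbfR^3)$, impossible), so $\overline{\mathrm{Range}(\mathcal L)}=(\ker\mathcal L^*)^\perp=L^2_\sigma(\bbfR^3)$. Now approximate $z_0$ by $\varphi=\mathcal L\psi$ with $\psi\in\mathcal D(\mathcal L)$; since $\mathcal L$ commutes with its semigroup,
\[
\|\se^{-t\mathcal L}\varphi\|_2=\|\mathcal L\se^{-t\mathcal L}\psi\|_2\leqslant Ct^{-1}\|\psi\|_2
\]
directly from \rf{sg prop}. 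Combined with \rf{w0 norm} and a triangle inequality this gives $\limsup_{t\to\infty}\|\se^{-t\mathcal L}z_0\|_2\leqslant\varepsilon$ for every $\varepsilon>0$. This bypasses entirely the compactness and invertibility issues you ran into.
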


\begin{proof}
Let $z_0 \in L^2_\sigma (\bbfR^3)$. Since the range of the operator $\mathcal{L}$ is a dense subspace of $L^2_\sigma (\bbfR^3)$, for every $\ve>0$ there exists a function $\varphi \in {\rm Range}(\mathcal{L})$ such that $\| \varphi -z_0\|_2<\ve $. Consequently, applying Corollary \ref{sg prop:c} and Corollary \ref{c:w0 norm}, we obtain
\begin{align*}
   \| \se^{-t\mathcal{L}}z_0\|_2&\leqslant \| \se^{-t\mathcal{L}}(z_0 - \varphi )\|_2+\| \se^{-t\mathcal{L}}\varphi \|_2\leqslant \ve + \| \mathcal{L}\se^{-t\mathcal{L}}\psi \|_2 \leqslant \ve + Ct^{-1}\| \psi\|_2 ,
\end{align*}
where $\psi \in \mathcal{D}(\mathcal{L})$. Hence, $\limsup_{t\rightarrow \infty }\| \se^{-t\mathcal{L}}z_0\|_2 \leqslant \ve$. Since $\ve>0$ is arbitrarily small, we complete the proof.
\end{proof}

\begin{corollary}\label{c:lim int et}
Under the assumptions of Theorem \ref{th:L}, for all $z_0 \in L^2_\sigma (\bbfR^3 )$
\begin{equation}\label{eq:lim int et}
   \lim_{t\rightarrow \infty }\frac{1}{t}\int_0^t \| \se^{-s\mathcal {L} }z_0\|_2 \ud s =0\, .
\end{equation}
\end{corollary}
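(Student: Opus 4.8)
The plan is to deduce \rf{eq:lim int et} directly from the pointwise limit \rf{eq:lim L2} established in Corollary~\ref{c:lim L2}, using a standard Ces\`aro-type argument. Set $f(s)=\|\se^{-s\mathcal{L}}z_0\|_2$. By Corollary~\ref{c:w0 norm}, $f$ is bounded: $0\leqslant f(s)\leqslant \|z_0\|_2$ for all $s\geqslant 0$, and by Corollary~\ref{c:lim L2}, $f(s)\to 0$ as $s\to\infty$. The claim is simply that the Ces\`aro means $\frac1t\int_0^t f(s)\ud s$ of a bounded function tending to zero also tend to zero.

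The key steps: fix $\ve>0$; by \rf{eq:lim L2} choose $T_0>0$ such that $f(s)<\ve$ for all $s\geqslant T_0$. Then for $t>T_0$ split the integral,
\begin{equation*}
   \frac1t\int_0^t f(s)\ud s = \frac1t\int_0^{T_0} f(s)\ud s + \frac1t\int_{T_0}^t f(s)\ud s \leqslant \frac{T_0\|z_0\|_2}{t} + \frac{t-T_0}{t}\,\ve \leqslant \frac{T_0\|z_0\|_2}{t} + \ve .
\end{equation*}
Letting $t\to\infty$ gives $\limsup_{t\to\infty}\frac1t\int_0^t f(s)\ud s \leqslant \ve$, and since $\ve>0$ is arbitrary, the limit is zero. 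One should also remark that $f$ is measurable (indeed continuous in $s$ for $s>0$ by strong continuity of the semigroup, and bounded near $0$), so the integral makes sense.

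I do not expect any genuine obstacle here; this corollary is a soft consequence of the preceding one, included presumably because the Ces\`aro average is what actually appears later in the $L^2$-decay proof for the full nonlinear problem (e.g. when integrating the energy inequality against the linearized evolution). The only point requiring a word of care is the integrability of $f$ near $s=0$, which is immediate from the uniform bound $f(s)\leqslant\|z_0\|_2$.

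\begin{proof}
Set $f(s)=\|\se^{-s\mathcal{L}}z_0\|_2$. By Corollary~\ref{c:w0 norm} we have $0\leqslant f(s)\leqslant \|z_0\|_2$ for all $s>0$, so $f$ is integrable on every bounded interval, and by Corollary~\ref{c:lim L2}, $f(s)\to 0$ as $s\to\infty$. Fix $\ve>0$ and choose $T_0>0$ such that $f(s)<\ve$ for all $s\geqslant T_0$. Then, for every $t>T_0$,
\begin{equation*}
   \frac1t\int_0^t f(s)\ud s = \frac1t\int_0^{T_0} f(s)\ud s + \frac1t\int_{T_0}^t f(s)\ud s \leqslant \frac{T_0\|z_0\|_2}{t} + \ve .
\end{equation*}
Passing to the upper limit as $t\to\infty$ yields $\limsup_{t\to\infty}\frac1t\int_0^t f(s)\ud s \leqslant \ve$. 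Since $\ve>0$ was arbitrary, we conclude that $\lim_{t\to\infty}\frac1t\int_0^t \|\se^{-s\mathcal{L}}z_0\|_2\ud s=0$.
\end{proof}
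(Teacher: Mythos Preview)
Your proof is correct and relies on the same two ingredients as the paper's, namely Corollary~\ref{c:w0 norm} (the uniform bound $\|\se^{-s\mathcal{L}}z_0\|_2\leqslant\|z_0\|_2$) and Corollary~\ref{c:lim L2} (the pointwise decay). The only cosmetic difference is in how the elementary real-analysis step is carried out: the paper substitutes $s=t\tau$ to rewrite the average as $\int_0^1\|\se^{-t\tau\mathcal{L}}z_0\|_2\,d\tau$ and then invokes the Lebesgue dominated convergence theorem, whereas you split the integral at a fixed $T_0$ in the standard Ces\`aro fashion; both arguments are equally short and prove the same elementary fact.
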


\begin{proof}
Substituting $s=t\tau $, we get
\begin{equation*}
   \frac{1}{t}\int_0^t \|\se^{-s\mathcal{L}}z_0\|_2=\int_0^1 \|\se^{-t\tau \mathcal{L} }z_0\|_2 \ud \tau \,.
\end{equation*}
Now, the desired result follows from Corollaries \ref{c:w0 norm} and \ref{c:lim L2} combined with the Lebesgue dominated convergence theorem.
\end{proof}

We conclude this section by showing the decay estimates of the semigroup $\se^{-t\mathcal{L}}$.

\begin{proposition}[Hypercontractivity]\label{th:hiper} 
Assume that $|c|\geqslant c_0>1$, where $c_0$ satisfies \rf{c0}. For each $p\in (\frac{6}{5},2)$ there exists a constant $C=C(p)>0$ such that for every $z_0 \in L^p_\sigma (\bbfR^3)$
\begin{equation}\label{semi:eq}
   \| \se^{-t\mathcal{L}}z_0\|_2\leqslant Ct^{-\frac{3}{2}(\frac{1}{p}-\frac{1}{2})}\|z_0\|_p 
\end{equation} 
for all $t>0$.
\end{proposition}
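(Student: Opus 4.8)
The plan is to derive \eqref{semi:eq} by duality from an $L^2$--$L^q$ smoothing estimate (for $2\leqslant q\leqslant 6$) for the semigroups $\se^{-t\mathcal{L}}$ and $\se^{-t\mathcal{L}^\ast}$, all the ingredients being already available in this section. First, combining the analyticity bound \eqref{sg prop2} with Corollary~\ref{l:sqrt L}, and recalling that every corollary of this section holds for $\mathcal{L}^\ast$ as well, one obtains for all $z_0\in L^2_\sigma(\bbfR^3)$ and $t>0$
\begin{equation*}
  \|\nabla\otimes \se^{-t\mathcal{L}}z_0\|_2+\|\nabla\otimes \se^{-t\mathcal{L}^\ast}z_0\|_2\leqslant C\,t^{-\nicefrac{1}{2}}\|z_0\|_2 .
\end{equation*}

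Next, I would interpolate this gradient estimate against the $L^2$-contractivity \eqref{w0 norm} by means of the Gagliardo--Nirenberg inequality $\|f\|_q\leqslant C\|\nabla f\|_2^{\theta}\|f\|_2^{1-\theta}$ valid in $\bbfR^3$ with $\theta=\tfrac{3}{2}\big(\tfrac{1}{2}-\tfrac{1}{q}\big)\in[0,1]$ for $q\in[2,6]$. This yields, for all $z_0\in L^2_\sigma(\bbfR^3)$ and $t>0$,
\begin{equation*}
  \|\se^{-t\mathcal{L}}z_0\|_q+\|\se^{-t\mathcal{L}^\ast}z_0\|_q\leqslant C\,t^{-\frac{3}{2}\left(\frac{1}{2}-\frac{1}{q}\right)}\|z_0\|_2 ,\qquad 2\leqslant q\leqslant 6 .
\end{equation*}

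Finally, I would invoke duality. Since $\big(\se^{-t\mathcal{L}}\big)^\ast=\se^{-t\mathcal{L}^\ast}$ on $L^2_\sigma(\bbfR^3)$, for $z_0\in L^p_\sigma(\bbfR^3)\cap L^2_\sigma(\bbfR^3)$ and $\psi\in L^2_\sigma(\bbfR^3)$ the Hölder inequality together with the previous step (with $q=p'=\tfrac{p}{p-1}$) gives
\begin{equation*}
  \big|\big(\se^{-t\mathcal{L}}z_0,\psi\big)\big|=\big|\big(z_0,\se^{-t\mathcal{L}^\ast}\psi\big)\big|\leqslant \|z_0\|_p\,\|\se^{-t\mathcal{L}^\ast}\psi\|_{p'}\leqslant C\,t^{-\frac{3}{2}\left(\frac{1}{p}-\frac{1}{2}\right)}\|z_0\|_p\,\|\psi\|_2 ,
\end{equation*}
because $\tfrac{1}{2}-\tfrac{1}{p'}=\tfrac{1}{p}-\tfrac{1}{2}$ and $p'\in(2,6)$ exactly when $p\in(\tfrac{6}{5},2)$. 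Taking the supremum over $\psi\in L^2_\sigma(\bbfR^3)$ with $\|\psi\|_2\leqslant 1$ proves \eqref{semi:eq} on the dense subspace $L^p_\sigma(\bbfR^3)\cap L^2_\sigma(\bbfR^3)$ of $L^p_\sigma(\bbfR^3)$, so that for each fixed $t>0$ the operator $\se^{-t\mathcal{L}}$ extends uniquely to a bounded map $L^p_\sigma(\bbfR^3)\to L^2_\sigma(\bbfR^3)$ satisfying \eqref{semi:eq}.

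The argument is short because the substantial analytic work — analyticity of $\se^{-t\mathcal{L}}$ and $\se^{-t\mathcal{L}^\ast}$ on $L^2_\sigma(\bbfR^3)$, the equivalence of $\|\nabla\otimes z\|_2$ and $\|\mathcal{L}^{\nicefrac{1}{2}}z\|_2$, and $L^2$-contractivity — has already been carried out through the Hardy-type inequality \eqref{eq:H}. The points that still require care are essentially bookkeeping: checking that the corollaries of this section apply verbatim to $\mathcal{L}^\ast$ (which follows from the lower bound for $a_{\mathcal{L}^\ast}(z,z)$ obtained in the proof of Theorem~\ref{th:L}), that the $L^2$-adjoint of $\se^{-t\mathcal{L}}$ is $\se^{-t\mathcal{L}^\ast}$, and the density step that gives meaning to $\se^{-t\mathcal{L}}z_0$ for $z_0\in L^p_\sigma(\bbfR^3)$ which fails to be square-integrable. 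The restriction $p>\tfrac{6}{5}$ is precisely what keeps the conjugate exponent $p'$ strictly below the Sobolev exponent $6$, while $p<2$ makes the bound a genuine decay estimate; no splitting into small and large $t$ is needed, since $\mathcal{L}$ is invariant under the parabolic rescaling, which also forces the power of $t$.
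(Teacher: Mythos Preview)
Your argument is correct and follows essentially the same route as the paper: prove an $L^2\to L^q$ smoothing estimate for $\se^{-t\mathcal{L}^\ast}$ via Gagliardo--Nirenberg combined with the $L^2$-contractivity (Corollary~\ref{c:w0 norm}) and the gradient bound coming from \eqref{sqrt L}--\eqref{sg prop2}, then dualize to obtain \eqref{semi:eq}. You are in fact slightly more careful than the paper, since you spell out the duality computation and the density extension to all of $L^p_\sigma(\bbfR^3)$, while the paper simply asserts these steps.
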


\begin{proof}
First, we consider the semigroup generated by the adjoint operator $\mathcal{L}^\ast $ defined in \rf{L*:eq}. For every $q\in (2,6)$, using the Gagliardo-Nirenberg-Sobolev inequality, we obtain
\begin{align*}
   \| \se^{-t\mathcal{L}^\ast} z_0\|_q^q \leqslant C \|\se ^{-t\mathcal{L}^\ast} z_0\|_2^{\frac{1}{2}(6-q)}\|\nabla \se^{-t\mathcal{L}^\ast} z_0\|_2^{\frac{3}{2}(q-2)}.
\end{align*}
Next, applying Corollaries \ref{c:w0 norm} and \ref{sg prop:c} with $\mathcal{L}$ replaced by $\mathcal{L^*}$, we get
\begin{align*}
   \| \se^{-t\mathcal{L}^\ast} z_0\|_q^q &\leqslant C\|z_0\|_2^{\frac{1}{2}(6-q)}\|{(\mathcal{L}^\ast )}^{\nicefrac{1}{2}} \se^{-t\mathcal{L}^\ast} z_0\|_2^{\frac{3}{2}(q-2)}\\
   &\leqslant C \Big(t^{-\frac{3}{2}(\frac{1}{2}-\frac{1}{q})}\|z_0\|_2\Big)^q \qquad \textrm{for all}\ \ t>0.
\end{align*}
Hence, by a duality argument, we immediately deduce the inequality 
\begin{equation*}
  \|\se^{-t\mathcal{L}}z_0\|_2\leqslant Ct^{-\frac{3}{2}(\frac{1}{p}-\frac{1}{2})}\|z_0\|_p \quad \textrm{for all}\ \ t>0,
\end{equation*}
with $p=\frac{q}{q-1}\in (\frac{6}{5},2)$.
\end{proof}
%%%%%%%%%%%%%%%%%%%%%%%%%%%%%%%%%%%%%%%%%%%%%%%%%%%%%%%%%%%%%%%%%%%%%%%%%%%%%%%%%%%%%%%%%%%%%%%%%%%%%%%%%%%%%%%%%%%%%%%%%%%%%%%%%%%%%%%%%%%%%%%%%%%%%%%%
\section{Asymptotic stability of weak solutions}
\setcounter{equation}{0}

To show the decay of $\|w(t)\|_2$, we use the approach from \cite{BM3} which involves the weak $L^p$-spaces. By this reason, let us recall the weak Marcinkiewicz $L^p$-spaces ($1<p<\infty $), denoted as usual by $L^{p,\infty }=L^{p,\infty }(\bbfR)$, which belong to the scale of the Lorentz spaces and contain measurable functions $f=f(x)$ satisfying the condition
\begin{equation}\label{Lsp}
   |\{x\in \bbfR : |f(x)|>\lambda \}|\leqslant C\lambda ^{-p}
\end{equation}
for all $\lambda >0$ and a constant $C$. One check that \rf{Lsp} is equivalent to
\begin{equation*}
   \int_{E}|f(x)| \ud x \leqslant \widetilde{C}|E|^{\frac{1}{q}}
\end{equation*}
for every measurable set $E$ with a finite measure, another constant $\widetilde{C}$, and $\frac{1}{q}+\frac{1}{p}=1$. This fact allows us to define the norm in $L^{p,\infty}$
\begin{equation}\label{norm:Lpw}
   \|f\|_{p,\infty }=\sup \Big\{ |E|^{-1+\frac{1}{q}}\int_{E}|f(x)| \ud x : E\in \mathcal{B} \Big\}
\end{equation}
where $\mathcal{B}$ is the collection of all Borel sets with a finite and positive measure. Recall the well-known imbedding $L^p\subset L^{p,\infty }$ being the consequence of the Markov inequality $|\{ x\in \bbfR : |f(x)|>\lambda \}|\leqslant \lambda^{-p}\int_{\bbfR}|f(x)|^p \ud x $. Moreover, the following inequalities hold true: \text{\it the weak H\"older inequality}:
\begin{equation}\label{wH:inq}
   \|fg\|_{r,\infty }\leqslant \|f\|_{p,\infty }\|g\|_{q,\infty }
\end{equation}
for every $1<p\leqslant \infty $ (here $L^{\infty , \infty }=L^\infty $), $1<q<\infty $ and $1<r<\infty $ satisfying $\frac{1}{r}=\frac{1}{q}+\frac{1}{p}$, and \text{\it the weak Young inequality }
\begin{equation}\label{wY:inq}
    \|f\ast g\|_{r,\infty }\leqslant C\|f\|_{p,\infty }\|g\|_{q,\infty }
\end{equation}
for every $1<p<\infty $, $1<q<\infty $ and $1<r<\infty $ satisfying $1+\frac{1}{r}=\frac{1}{p}+\frac{1}{q}$. We refer reader to \cite{BM3} for the proofs of the results stated above. 

The following lemma is extracted from reasonings contained in \cite{BM3} and its proof is based on properties of the weak $L^p$-spaces.

\begin{lemma}\label{l:weak Y}
Assume that $f\in L^1 \big( (0,+\infty )\big)$. For every $\alpha \in (1, +\infty ]$ there exists a constant $C>0$ such that 
\begin{equation}\label{inq:weak Y}
     \frac{1}{t} \int_0^t \Big(|\cdot |^{-\frac{1}{2}} \ast \big(|\cdot |^{-\frac{1}{\alpha}}f^{\frac{3}{4}}\big) \Big) (s) \ud s\leqslant Ct^{-\frac{1}{4}-\frac{1}{\alpha }}\|f\|_1^{\frac{3}{4}}.
\end{equation}
for all $t>0$. Here, for $\alpha =+\infty $, the quantity $\nicefrac{1}{\alpha }$ should be replaced by $0$.
\end{lemma}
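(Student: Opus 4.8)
The inequality to be proven is essentially a Young-type convolution estimate, averaged in time. The left side is $\frac{1}{t}\int_0^t G(s)\,ds$ where $G(s) = \bigl(|\cdot|^{-1/2} \ast (|\cdot|^{-1/\alpha} f^{3/4})\bigr)(s)$, so my first move is to estimate $G$ pointwise (or rather, in a suitable weak-$L^p$ sense) and only then integrate. The natural tool is the weak Young inequality \rf{wY:inq}: the kernel $|\cdot|^{-1/2}$ on $(0,\infty)$ lies in $L^{2,\infty}$, and I need to control $|\cdot|^{-1/\alpha} f^{3/4}$. Since $f \in L^1(0,\infty)$, Hölder gives $f^{3/4} \in L^{4/3}(0,\infty) \subset L^{4/3,\infty}$ with $\|f^{3/4}\|_{4/3} = \|f\|_1^{3/4}$, while $|\cdot|^{-1/\alpha} \in L^{\alpha,\infty}(0,\infty)$ (for $\alpha = \infty$ it is just the bounded function $1$). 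The weak Hölder inequality \rf{wH:inq} then puts $|\cdot|^{-1/\alpha} f^{3/4}$ into $L^{r_1,\infty}$ with $\frac{1}{r_1} = \frac{1}{\alpha} + \frac{3}{4}$, and then weak Young with the $L^{2,\infty}$ kernel lands $G$ in $L^{r_2,\infty}$ with $1 + \frac{1}{r_2} = \frac{1}{2} + \frac{1}{\alpha} + \frac{3}{4}$, i.e. $\frac{1}{r_2} = \frac{1}{\alpha} + \frac{1}{4}$.

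The second step is to convert the $L^{r_2,\infty}$-bound on $G$ into the claimed time-averaged decay. Using the norm characterization \rf{norm:Lpw}, for the Borel set $E = (0,t)$ we get
\begin{equation*}
   \int_0^t G(s)\,ds \leqslant |(0,t)|^{1 - \frac{1}{r_2}}\, \|G\|_{r_2,\infty} = t^{1 - \frac{1}{r_2}}\, \|G\|_{r_2,\infty}\, ,
\end{equation*}
so dividing by $t$ yields $\frac{1}{t}\int_0^t G(s)\,ds \leqslant t^{-1/r_2}\|G\|_{r_2,\infty} = C\, t^{-\frac{1}{4} - \frac{1}{\alpha}}\|f\|_1^{3/4}$, which is exactly \rf{inq:weak Y}. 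So the structure is: (i) Hölder to pass from $L^1$ to $L^{4/3}$ for $f^{3/4}$; (ii) weak Hölder to absorb the singular weight $|\cdot|^{-1/\alpha}$; (iii) weak Young with the kernel $|\cdot|^{-1/2}$; (iv) the integral-over-a-set bound built into the $L^{r_2,\infty}$ norm.

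**The main obstacle.** The delicate point is bookkeeping the exponents and checking that every application of \rf{wH:inq} and \rf{wY:inq} is legitimate — in particular that all the intermediate indices $r_1, r_2$ genuinely lie in $(1,\infty)$, which is where the hypothesis $\alpha \in (1,\infty]$ is used: one needs $\frac{1}{\alpha} + \frac{3}{4} < 1$, i.e. $\alpha > 4$, for $r_1 > 1$ — and here a slight care is required, since for $\alpha \in (1,4]$ the factor $|\cdot|^{-1/\alpha} f^{3/4}$ need not lie in any $L^{r_1,\infty}$ with $r_1>1$ via this splitting. The cleaner route, which I expect is what is intended, is to absorb the weight differently: write $|\cdot|^{-1/\alpha} f^{3/4} = (|\cdot|^{-1/\alpha}) \cdot (f^{3/4})$ and apply the weak Young inequality directly to the triple product, noting $|\cdot|^{-1/2} \ast (|\cdot|^{-1/\alpha} \cdot f^{3/4})$; since $f^{3/4} \in L^{4/3}$ one can instead treat $|\cdot|^{-1/2} \cdot |\cdot|^{-1/\alpha}$ — no, these do not combine into a single power conveniently. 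The genuinely safe approach is: keep $f^{3/4} \in L^{4/3,\infty}$, note $|\cdot|^{-1/\alpha} \in L^{\alpha,\infty}$, and apply weak Hölder to get $|\cdot|^{-1/\alpha} f^{3/4} \in L^{r_1,\infty}$ with $\frac{1}{r_1} = \frac34 + \frac1\alpha \in (\frac34, \frac74)$; when $\frac34 + \frac1\alpha \geqslant 1$ this $r_1 \leqslant 1$ and we cannot feed it into \rf{wY:inq}. Hence for the full range $\alpha \in (1,\infty]$ one should instead split $f^{3/4} = f^{1/2}\cdot f^{1/4}$ with $f^{1/2}\in L^{2,\infty}$ and $f^{1/4}\in L^{4,\infty}$, regroup $|\cdot|^{-1/2}\ast(\,\cdot\,)$ so that the singularities $|\cdot|^{-1/2}$ and $|\cdot|^{-1/\alpha}$ are paired with $f$-factors having small enough weak-norm indices, and iterate \rf{wH:inq}–\rf{wY:inq}; reconciling this so that every intermediate index stays in $(1,\infty)$ while the final power comes out as $-\frac14-\frac1\alpha$ is the only nontrivial part of the argument, and it is purely a matter of careful exponent arithmetic with no analytic difficulty beyond the cited inequalities. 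I would carry out that arithmetic explicitly, treating $\alpha = \infty$ as the degenerate case where $|\cdot|^{-1/\alpha}$ disappears and the estimate reduces to a single application of weak Young and the norm formula \rf{norm:Lpw}.
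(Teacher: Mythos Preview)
Your approach is exactly the paper's: use the norm characterization \rf{norm:Lpw} on the interval $(0,t)$ to turn the time average into a weak-$L^p$ bound on the convolution, then weak Young \rf{wY:inq} with the kernel $|\cdot|^{-1/2}\in L^{2,\infty}$, then weak H\"older \rf{wH:inq} to split off $|\cdot|^{-1/\alpha}\in L^{\alpha,\infty}$ from $f^{3/4}\in L^{4/3}\subset L^{4/3,\infty}$. Your exponent arithmetic matches the paper's line for line.

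You are also right that this chain needs $\tfrac{1}{r_1}=\tfrac{3}{4}+\tfrac{1}{\alpha}<1$, i.e.\ $\alpha>4$, for the weak Young step to be legitimate. The paper's proof has exactly the same restriction (it writes ``for every $p,q\in(1,\infty)$ satisfying $1+\tfrac{1}{p}=\tfrac{1}{2}+\tfrac{1}{q}$'' and then sets $\tfrac{1}{q}=\tfrac{3}{4}+\tfrac{1}{\alpha}$), and does not address the range $\alpha\in(1,4]$. This is harmless for the paper's purposes: the lemma is invoked only with $\alpha=\infty$ (proof of Theorem~\ref{lim w:th}) and $\alpha=8$ (proof of Corollary~\ref{cor. decay}), both safely in the range $\alpha>4$.

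Your proposed workaround for $\alpha\in(1,4]$ --- splitting $f^{3/4}=f^{1/2}\cdot f^{1/4}$ and regrouping --- is not convincing as written. The obstruction is structural: however you factor $f^{3/4}$, the total weak-Lebesgue budget contributed by the $f$-factors is fixed at $\tfrac{3}{4}$, and together with $|\cdot|^{-1/2}\in L^{2,\infty}$ and $|\cdot|^{-1/\alpha}\in L^{\alpha,\infty}$ the Young--H\"older bookkeeping still forces an intermediate index $\leqslant 1$ somewhere when $\tfrac{1}{\alpha}\geqslant\tfrac{1}{4}$. If you want the full range you would need a genuinely different estimate (e.g.\ a direct pointwise bound on the convolution, or a generalized Young inequality allowing one index equal to $1$), not a relabelling of the same inequalities. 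For the purposes of matching the paper, simply carry out the $\alpha=\infty$ and $\alpha\in(4,\infty)$ cases and note that these suffice.
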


\begin{proof}
First, consider $\nicefrac{1}{\alpha }=0$.
Using the definition of the norm \rf{norm:Lpw} in the weak $L^p$-spaces, we have 
\begin{equation*}
   L_1=\frac{1}{t}\int_0^t \Big( |\cdot |^{-\frac{1}{2}} \ast f^{\frac{3}{4}}\Big) (s) \ud s\leqslant t^{-\frac{1}{q}}\big\| |\cdot |^{-\frac{1}{2}}\ast f^{\frac{3}{4}}\big\|_{q,\infty }
\end{equation*}
for every $q\in (1, \infty )$ to be chosen later. Since, $\|g\|_{p,\infty }\leqslant C\|g\|_p$ for all $p\in [1,\infty ]$, the weak Young inequality \rf{wY:inq} implies
\begin{equation*}
  L_1\leqslant Ct^{-\frac{1}{q}}\big\| |\cdot |^{-\frac{1}{2}} \big\|_{2,\infty } \|f\|_{\frac{3}{4}r}^{\frac{3}{4}},
\end{equation*}  
where $1+\frac{1}{q}=\frac{1}{2}+\frac{1}{r}$. Hence, however, we require $\nicefrac{3r}{4}=1$, hence $q=4$. Since the function $|\cdot |^{-\frac{1}{2}}\in L^{2,\infty }\big( (0,+\infty ) \big)$, we complete the proof of \rf{inq:weak Y} in case $\alpha =+\infty $.

For $\alpha \in (0, +\infty )$, applying an analogous argument involving the definition of the norm \rf{norm:Lpw} in the weak $L^p$-spaces, the weak Young inequality \rf{wY:inq}, we obtain
\begin{align*}
   L_2&=\frac{1}{t} \int_0^t \Big(|\cdot |^{-\frac{1}{2}} \ast |\cdot |^{-\frac{1}{\alpha}}f^{\frac{3}{4}} \Big) (s) \ud s\leqslant t^{-\frac{1}{p}}\big\| |\cdot |^{-\frac{1}{2}}\ast |\cdot |^{-\frac{1}{\alpha }}f^{\frac{3}{4}}\big\|_{p,\infty }\\
   & \leqslant t^{-\frac{1}{p}}\big\| |\cdot |^{-\frac{1}{2}} \big\|_{2,\infty } \big\| |\cdot |^{-\frac{1}{\alpha}}f^{\frac{3}{4}}\big\|_{q,\infty }
\end{align*}
for every $p,q \in (1, \infty )$ satisfying $1+\frac{1}{p}=\frac{1}{2}+\frac{1}{q}$.
Now, the weak H\"older inequality \rf{wH:inq} gives us 
\begin{equation*}
    L_2\leqslant Ct^{-\frac{1}{p}}\big\| |\cdot |^{-\frac{1}{2}} \big\|_{2,\infty }\big\| |\cdot |^{-\frac{1}{\alpha }} \big\|_{\alpha ,\infty }\| f\|_{\frac{3}{4}r}^{\frac{3}{4}},
\end{equation*}
where $\frac{1}{q}=\frac{1}{r}+\frac{1}{\alpha }$. Assuming $\nicefrac{3r}{4}=1$, we get $\nicefrac{1}{p}=\nicefrac{1}{4}+\nicefrac{1}{\alpha }$. 
\end{proof}

\begin{lemma}\label{l:P}
There exists $C>0$ such that for all $v,w \in H^1_\sigma (\bbfR^3)$ and   $\psi \in L^2 _\sigma (\bbfR^3)$ the following estimate 
\begin{equation}\label{P:inq}
\big((w\cdot \nabla )v,  e^{-t\mathcal{L}^*} \psi \big)
  \leqslant Ct^{-\frac{1}{2}}(\|w\|_2\|v\|_2)^{\frac{1}{4}}(\| \nabla w\|_2 \| \nabla v\|_2)^{\frac{3}{4}}\|\psi\|_2
\end{equation}
holds true for all $t>0$.
\end{lemma}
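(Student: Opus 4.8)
The plan is to estimate the trilinear expression $\big((w\cdot\nabla)v,\,e^{-t\mathcal{L}^*}\psi\big)$ by putting all of the singular-in-$t$ cost on the semigroup factor and distributing the regularity between $w$ and $v$ via interpolation. First I would use the Schwarz inequality in the form
\[
\big|\big((w\cdot\nabla)v,\,e^{-t\mathcal{L}^*}\psi\big)\big|
\leqslant \|(w\cdot\nabla)v\|_{r'}\,\|e^{-t\mathcal{L}^*}\psi\|_{r}
\]
for a suitable exponent $r\in(2,6)$ with conjugate $r'$. The second factor is controlled by the hypercontractivity estimate: by duality Proposition~\ref{th:hiper} (applied to $\mathcal{L}^*$, which generates a holomorphic semigroup with the same decay properties by Theorem~\ref{th:L} and the remark following the corollaries) gives $\|e^{-t\mathcal{L}^*}\psi\|_{r}\leqslant Ct^{-\frac{3}{2}(\frac12-\frac1r)}\|\psi\|_2$. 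The first factor I would bound by Hölder: $\|(w\cdot\nabla)v\|_{r'}\leqslant \|w\|_{a}\|\nabla v\|_{b}$ with $\frac{1}{r'}=\frac1a+\frac1b$, and then interpolate $\|w\|_a$ between $\|w\|_2$ and $\|w\|_6\leqslant C\|\nabla w\|_2$ (Sobolev), and similarly $\|\nabla v\|_b$ between $\|\nabla v\|_2$ and (if needed) a higher norm — but here one wants exactly the exponents $\tfrac14$ on the $L^2$ norms and $\tfrac34$ on the gradient norms, so the bookkeeping must be arranged to land on those.

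To pin down the exponents: the target right-hand side is $t^{-1/2}(\|w\|_2\|v\|_2)^{1/4}(\|\nabla w\|_2\|\nabla v\|_2)^{3/4}\|\psi\|_2$. Matching the power of $t$ forces $\tfrac32(\tfrac12-\tfrac1r)=\tfrac12$, i.e. $\tfrac1r=\tfrac16$, so $r=6$ — but that is the endpoint of $(2,6)$ where hypercontractivity degenerates, so instead I would treat $v$ symmetrically: write $\|(w\cdot\nabla)v\|_{r'}$ with both $w$ and $\nabla v$ placed in $L^4$-type spaces. Concretely, note $(w\cdot\nabla)v\in L^{3/2}$ is too strong; rather, the clean route is to symmetrize in $w,v$ from the start by integrating by parts, $\big((w\cdot\nabla)v,\varphi\big)=-\big((w\cdot\nabla)\varphi,v\big)$ since $\mathrm{div}\,w=0$, which lets the derivative fall on the smooth semigroup output $\varphi=e^{-t\mathcal{L}^*}\psi$. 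Then by Hölder $\big|\big((w\cdot\nabla)\varphi,v\big)\big|\leqslant\|w\|_a\|\nabla\varphi\|_2\|v\|_b$ with $\tfrac1a+\tfrac1b=\tfrac12$; choosing $a=b$ gives $a=b=4$, interpolating $\|w\|_4\leqslant C\|w\|_2^{1/4}\|\nabla w\|_2^{3/4}$ and $\|v\|_4\leqslant C\|v\|_2^{1/4}\|\nabla v\|_2^{3/4}$ by Gagliardo–Nirenberg; and finally bounding $\|\nabla\varphi\|_2=\|\nabla e^{-t\mathcal{L}^*}\psi\|_2\leqslant C\|(\mathcal{L}^*)^{1/2}e^{-t\mathcal{L}^*}\psi\|_2\leqslant Ct^{-1/2}\|\psi\|_2$ by Corollary~\ref{l:sqrt L} (for $\mathcal{L}^*$) and Corollary~\ref{sg prop:c}, inequality~\rf{sg prop2}. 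Assembling these four bounds yields exactly \rf{P:inq}.

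The main obstacle is simply arithmetic discipline — choosing the single self-consistent set of exponents ($a=b=4$, gradient on $\varphi$, the $\tfrac14$–$\tfrac34$ split from Gagliardo–Nirenberg in $\mathbb{R}^3$) so that the powers match on the nose and the $t$-decay comes out to be exactly $t^{-1/2}$; there is essentially no analytic difficulty once the integration by parts is done, since every ingredient (Sobolev/Gagliardo–Nirenberg embedding, $\|\nabla\cdot\|_2\sim\|(\mathcal{L}^*)^{1/2}\cdot\|_2$, and the $t^{-1/2}$ smoothing of $(\mathcal{L}^*)^{1/2}e^{-t\mathcal{L}^*}$) is already available from Section~\ref{lin:eq}. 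One small point to check is that Corollaries~\ref{l:sqrt L} and~\ref{sg prop:c} are indeed valid with $\mathcal{L}$ replaced by $\mathcal{L}^*$, which is exactly what the sentence preceding Corollary~\ref{l:sqrt L} asserts.
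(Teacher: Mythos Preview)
Your final argument is correct and matches the paper's proof essentially line for line: integrate by parts to move the gradient onto $\varphi=e^{-t\mathcal{L}^*}\psi$, apply H\"older with $L^4\times L^4\times L^2$, use Gagliardo--Nirenberg $\|f\|_4\leqslant C\|f\|_2^{1/4}\|\nabla f\|_2^{3/4}$ for $w$ and $v$, and bound $\|\nabla\varphi\|_2$ via Corollaries~\ref{l:sqrt L} and~\ref{sg prop:c} (for $\mathcal{L}^*$). The initial hypercontractivity detour is unnecessary but harmless exploration.
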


\begin{proof} 
By inequalities \rf{sqrt L} and  \rf{sg prop2}
(with $\mathcal{L}$ replaced by $\mathcal{L}^*$), 
we have $\| \nabla \se^{-t\mathcal{L}^*}\psi \|_2\leqslant C t^{-\frac{1}{2}}\| \psi \|_2$. Hence, a direct calculation involving the integration by parts, the H\"older inequality and 
inequality \eqref{w0 norm}
leads to
\begin{align*}
  |\big((w\cdot \nabla )v,  e^{-t\mathcal{L}^*} \psi \big)|&=|\big(v,w\cdot\nabla \se^{-t\mathcal{L}^*}\psi\big )|\leqslant \|v\|_4\|w\cdot \nabla\se^{-t\mathcal{L}^*}\psi\|_{\frac{4}{3}}\\
&\leqslant\|v\|_4\|w\|_4\|\nabla \se^{-t\mathcal{L}}\psi\|_2 \leqslant Ct^{-\frac{1}{2}}\|v\|_4\|w\|_4\|\psi\|_2.
\end{align*}
Hence, the proof is completed by 
 the Sobolev inequality
$
 \|v\|_4 \leqslant C\| \nabla v\|_2^{\frac{3}{4}}\|v\|_2^{\frac{1}{4}},
$
which holds true for  all  $v\in H^1_\sigma (\bbfR^3)$.
\end{proof}

\begin{proof}[Proof of Theorem \ref{lim w:th}]
Let $w$ be a weak solution of system \rf{w:eq}--\rf{w:ini} in the space $X_T$ defined in Theorem \ref{th:ex} which satisfies the strong energy inequality \rf{en:inq}. First, we show that
\begin{equation}\label{int:lim}
  \lim_{t\rightarrow \infty} \frac{1}{t}\int_0^t \|w(s)\|_2\ud s=0.
\end{equation}
Observe that inequality \rf{en:inq} implies $\|w(\cdot )\|_2 \in L^{\infty }(0, \infty )$ and $\| \nabla w(\cdot )\|_2^2 \in L^1(0,\infty )$. 
Now,  with an arbitrary $\psi\in C^\infty_{c, \sigma}(\bbfR^3)$,
we substitute $\varphi(\tau) =\se^{-(s-\tau)\mathcal{L}^*} \psi$ into equation \eqref{weak}
(with $s=0$ and $t$ replaced by $s$)
to obtain
the following integral formulation of  problem \rf{w:eq}-\rf{w:ini}
\begin{equation}\label{duhamel}
    \big(w(s), \psi\big)=
\big(\se^{-s\mathcal{L}}w_0, \psi\big)
+\int_0^s 
\big( (w\cdot \nabla w)(\tau),
\se^{-(s-\tau)\mathcal{L}^*}\psi\big)
\ud \tau .
\end{equation}
Here, in calculations  leading  to \eqref{duhamel},
 one should transform the last term on the right-hand side of \rf{weak} in the following way
\begin{equation*}
\begin{split}
\int_0^s \big(w, \varphi_\tau\big)\,d\tau
&=\int_0^s \big(w, \mathcal{L}^* \varphi\big)\,d\tau=\int_0^s \big(\mathcal{L} w,  \varphi\big)\,d\tau\\
&= \int_0^s
\Big[ 
\big( \nabla w, \nabla \varphi\big)
+ \big(w\cdot  \nabla v_c , \varphi\big)
+\big(v_c \cdot \nabla w, \varphi\big) 
 \Big]\,d\tau,
\end{split}
\end{equation*}
because  ${\rm div}\, \varphi=0$.

Hence, applying Lemma \ref{l:P} to estimate the nonlinear term  in \eqref{duhamel} and the $L^2$-duality argument, we get
\begin{align*}
     \|w(s)\|_2 
     &\leqslant \| \se^{-s \mathcal{L}}w_0\|_2+ C\int_{0}^{s} (s-\tau)^{-\frac{1}{2}}\|w(\tau )\|_2^{\frac{1}{2}}\| \nabla w(\tau )\|_2^{\frac{3}{2}} \ud \tau \\
     &\leqslant \| \se^{-s \mathcal{L}}w_0\|_2+ C\sup_{\tau >0}\|w(\tau )\|_2^{\frac{1}{2}}\int_{0}^{s} (s-\tau)^{-\frac{1}{2}}\| \nabla w(\tau )\|_2^{\frac{3}{2}} \ud \tau 
\end{align*} 
since $\|w(\cdot )\|_2 \in L^{\infty }(0,\infty )$. Integrating from $0$ to $t$ and multiplying by $\nicefrac{1}{t}$, we obtain
\begin{equation*}
   \frac{1}{t}\int_{0}^{t} \|w(s)\|_2 \ud s \leqslant \frac{1}{t}\int_{0}^{t}\| \se^{-s \mathcal{L}}w_0\|_2\ud s + \frac{C}{t}\int_0^t \Big( |\cdot |^{-\frac   {1}{2}}\ast \| \nabla w(\cdot )\|_2^{\frac{3}{2}}\Big) (s) \ud s\, .
\end{equation*}
Now, since $\| \nabla w\|_2^2 \in L^1\big( (0,+\infty )\big)$, we apply Lemma \ref{l:weak Y}  with $\nicefrac{1}{\alpha} =0$ 
to get the estimate
\begin{align}\label{int:lim1}
  \frac{1}{t}\int_{0}^{t}\| w(s)\|_2 \ud s \leqslant \frac{1}{t}\int_0^t\| \se^{-s \mathcal{L}}w_0\|_2 \ud s + Ct^{-\frac{1}{4}},
\end{align} 
which proves \rf{int:lim} by Corollary \ref{c:lim int et}.

Next, notice that, by the strong energy inequality \rf{en:inq}, $\| w(t)\|_2$ is a non-increasing function of $t$ for almost all $t\geqslant 0$. Hence, for  $t>0$ we obtain
\begin{align}\label{int:lim3}
   \| w(t)\|_2= \frac{1}{t} \| w(t)\|_2 \int_0^t \ud s\leqslant \frac{1}{t}\int_0^t \| w(s)\|_2 \ud s \, .
\end{align}
The proof is completed by \rf{int:lim}.
\end{proof}

\begin{proof}[Proof of Corollary \ref{cor. decay}]
Using the decay estimate from Proposition \ref{th:hiper}, we have 
\begin{equation*}
    \frac{1}{t} \int_0^t \| \se^{-s\mathcal{L}}w_0\|_2 \ud s \leqslant Ct^{-\frac{3}{2}(\frac{1}{p}-\frac{1}{2})}\|w_0\|_p
\end{equation*}
for each $p\in (\frac{6}{5}, 2)$ and all $t>0$. Applying this inequality in \rf{int:lim1} and recalling \rf{int:lim3}, we complete the proof of the corollary in the case of $p\in [\frac{3}{2}, 2)$. 

Now, we notice that for $p\in (\frac{6}{5}, \frac{3}{2})$ inequality \rf{int:lim1} implies $\| w(t)\|_2 \leqslant Ct^{-\frac{1}{4}}$ for all $t>0$. Hence, repeating the reasoning from the proof of Theorem \ref{lim w:th} and applying Lemma \ref{l:weak Y}
with $\alpha=8$, we get the estimate 
\begin{align*}
   \frac{1}{t}\int_{0}^{t} \|w(s)\|_2 \ud s &\leqslant \frac{1}{t}\int_{0}^{t}\| \se^{-s \mathcal{L}}w_0\|_2\ud s + \frac{C}{t}\int_0^t \Big( |\cdot |^{-\frac   {1}{2}}\ast | \cdot |^{-\frac{1}{8}}\| \nabla w(\cdot )\|_2^{\frac{3}{2}}\Big) (s) \ud s \\
   &\leqslant Ct^{-\frac{3}{2}(\frac{1}{p}-\frac{1}{2})}\|w_0\|_p +Ct^{-\frac{3}{8}},
\end{align*}
which proves decay estimate \rf{decay} for $p\in [\frac{4}{3},\frac{3}{2})$. Repeating this procedure finitely many times, we complete the proof for each $p\in (\frac{6}{5}, 2)$.
\end{proof}

\end{document}